\documentclass{amsart}

\usepackage{color,graphicx,amssymb,latexsym,amsfonts,txfonts,amsmath,amsthm}
\usepackage{pdfsync}
\usepackage{amsmath,amscd}
\usepackage[all,cmtip]{xy}

\usepackage{tikz}
\usetikzlibrary{matrix}

\input epsf

\usepackage{hyperref}
\hypersetup{
    colorlinks=true,       
    linkcolor=blue,          
    citecolor=blue,        
    filecolor=blue,      
    urlcolor=blue           
}

\theoremstyle{plain}
\newtheorem{theo}{Theorem}

\newtheorem{lemm}{Lemma}
\newtheorem{coro}{Corollary}

\theoremstyle{definition}

\newtheorem{rema}{Remark}

\begin{document}

\title{Cyclic-Schottky strata of Schottky space}
\author{Rub\'en A. Hidalgo}
\address{Departamento de Matem\'atica y Estad\'{\i}stica, Universidad de La Frontera. Temuco, Chile}
\email{ruben.hidalgo@ufrontera.cl}

\author{Milagros Izquierdo}
\address{Mathematiska Institutionen, Link\"opings Universitet, 581 83 Link\"opings, Sweden}
\email{milagros.izquierdo@liu.se}

\thanks{Partially supported by Projects Fondecyt 1230001 and 1220261}
\subjclass[2010]{Primary 30F10, 30F40}

\keywords{Schottky groups, Riemann surfaces, Quasiconformal deformation}

\begin{abstract} 
Schottky space  ${\mathcal S}_{g}$, where $g \geq 2$ is an integer, is a connected complex orbifold of dimension $3(g-1)$; it provides a parametrization 
of the ${\rm PSL}_{2}({\mathbb C})$-conjugacy classes of Schottky groups $\Gamma$ of rank $g$.  The 
branch locus ${\mathcal B}_{g} \subset {\mathcal S}_{g}$, consisting of those conjugacy classes of  Schottky groups being a finite index proper normal subgroup of some Kleinian group, is known to be connected. If $[\Gamma] \in {\mathcal B}_{g}$, then there is a Kleinian group $K$ containing $\Gamma$ as a normal subgroup of index some prime integer $p \geq 2$.
The structural description, in terms of Klein-Maskit Combination Theorems, of such a group $K$  is completely determined by a triple $(t,r,s)$, where $t,r,s \geq 0$ are integers such that $g=p(t+r+s-1)+1-r$. For each such a tuple $(g,p;t,r,s)$  there is a corresponding cyclic-Schottky stratum $F(g,p;t,r,s) \subset {\mathcal B}_{g}$. 
It is known that $F(g,2;t,r,s)$ is connected. 
In this paper, for $p \geq 3$, we study the connectivity of these $F(g,p;t,r,s)$. 
\end{abstract}

\maketitle

\section{Introduction}
Let $\Gamma$ be a Schottky group of rank $g \geq 2$ (a purely loxodromic Kleinian group, isomorphic to the free group $F_{g}$ of rank $g$, and with a non-empty region of discontinuity). Its region of discontinuity $\Omega$ is non-empty and connected (the complement of a Cantor set) and 
$S=\Omega/\Gamma$ is a closed Riemann surface of genus $g$ (we say that $\Gamma$ uniformizes $S$). 
As a consequence of Koebe's retrosection theorem \cite{Bers,Koebe}, every closed Riemann surface of genus $g \geq 2$ is uniformized by a suitable Schottky group of rank $g$. Moreover, by the planarity theorem \cite{Maskit:planarity}, Schottky groups correspond to their lowest uniformizations.

The quasiconformal deformation space 
${\mathcal Q}(\Gamma)$ is a complex manifold of dimension $3(g-1)$ \cite{Bers, Nag} and 
its group of holomorphic automorphisms is isomorphic to ${\rm Out}(F_{g})$ \cite{Earle}. As ${\rm Out}(F_{g})$ acts discontinuously on it, the quotient orbifold ${\mathcal Q}(\Gamma)/{\rm Out}(F_{g})$ is a complex orbifold of dimension $3(g-1)$. This quotient orbifold can be identified with the Schottky space ${\mathcal S}_{g}$, that parametrizes the ${\rm PSL}_{2}({\mathbb C})$-conjugacy classes of Schottky groups of rank $g$.

Inside ${\mathcal S}_{g}$ is its branch locus  ${\mathcal B}_{g}$, which consists of the (conjugacy classes of) Schottky groups which are a proper finite index normal subgroup of some Kleinian group. Note that we may assume, without loss of generality, that the index is a prime integer.
If $g \geq 3$, then ${\mathcal B}_{g}$ is exactly the locus of ${\mathcal S}_{g}$ where it fails to be a topological manifold. If $g=2$, then ${\mathcal B}_{2}={\mathcal S}_{2}$. This last fact comes from the observation that every rank two Schottky group $G=\langle A, B\rangle$  is an index two subgroup of $K=\langle E=AB-BA, EA, EB\rangle \cong {\mathbb Z}_{2} * {\mathbb Z}_{2} * {\mathbb Z}_{2}$ \cite{Keen}. In \cite{H-I, H-I:2} it was proved that ${\mathcal B}_{g}$ is always connected.

We will say that a tuple $(g,p;t,r,s)$ is {\it admissible} if (i) $p \geq 2$ is a prime integer and (ii) $t,r,s \geq 0$ are integers such that $g=p(t+r+s-1)+1-r$.

Let $K$ be a  Kleinian group admitting a Schottky group $\Gamma$ of rank $g$ as a normal subgroup of index a prime integer $p \geq 2$. In \cite{Hidalgo:SchottkyAuto} it was observed that the geometrical structure of $K$, in terms of Klein-Maskit Combination Theorems  \cite{Maskit:Comb,Maskit:Comb4}, is uniquely determined by an admissible tuple $(g,p;t,r,s)$ (see Section \ref{Sec:cyclicschottky}). In this case, we say that $K$  is a {\it cyclic-Schottky group} of type $(g,p;t,r,s)$. This structure description, for example, permits us to observe that: (i) any two cyclic-Schottky groups of the same type are quasiconformally conjugated, (ii) any admissible tuple is the type of a cyclic-Schottky group, and (iii)
if $K$ is a cyclic-Schottky group of type $(g,p;t,r,s)$, with region of discontinuity $\Omega$,  and $\Gamma$ is a Schottky group inside $K$ as a normal subgroup of index $p$, then $S=\Omega/\Gamma$ is a closed Riemann surface of genus $g$ admitting a conformal automorphism $\tau$ of order $p$ such that $S/\langle \tau \rangle= \Omega/K$ is an orbifold of genus $t+s$ and with exactly $2r$ cone points, each one of oder $p$.

Conversely to (iii) above, assume that $S$ is a closed Riemann surface of genus $g$ admitting a conformal automorphism $\tau$ of order $p$ such that $S/\langle \tau \rangle$ has genus $t+s$ and exactly $2r$ cone points of order $p$. If there is a Schottky group $\Gamma$ uniformizing $S$ for which $\tau$ lifts, then there is a cyclic-Schottky group $K$ of some type $(g,p;t,r,s)$ containing $\Gamma$ as a normal subgroup of index $p$.  In this way, cyclic-Schottky groups of type $(g,p;t,r,s)$ correspond to the Schottky uniformizations of a genus $g$ Riemann surface reflecting an order $p$ conformal automorphism with quotient orbifold as above. A $3$-dimensional context (at the level of automorphisms of handlebodies) can be found in \cite{Zimmermann}.

Associated to an admissible tuple $(g,p;t,r,s)$ is the cyclic-Schottky strata $F(g,p;t,r,s) \subset {\mathcal B}_{g}$, formed by the (conjugacy classes of) Schottky groups of rank $g$ which are contained as an index $p$ normal subgroup of some cyclic-Schottky group of type $(g,p;t,r,s)$. The branch locus ${\mathcal B}_{g}$ is the finite union of those strata $F(g,p;t,r,s)$, where $(g,p;t,r,s)$ runs over all possible admissible tuples. 

Each cyclic-Schottky stratum $F(g,p;t,r,s)$ is a finite union of connected complex orbifolds (which might or not intersect), called its {\it irreducible components}.  Each irreducible component is isomorphic to the complex orbifold ${\mathcal Q}(K)/{\rm Mod}(K)$, where ${\mathcal Q}(K)$ is the quasiconformal deformation space of a cyclic-Schottky group $K$ of type $(g,p;t,r,s)$  and ${\rm Mod}(K)$ is its modular group. In particular, any two irreducible components of the same $F(g,p;t,r,s)$ are isomorphic complex orbifolds of dimension $(3g-3-r(p-3))/p$ (the dimension of ${\mathcal Q}(K)$, see Remark \ref{obs:dimension}). The maximal dimension is obtained for $p=2$, $r=g+1$, $t=s=0$;  in this case, $F(g,2;0,g+1,0)$ is the locus of classes of hyperelliptic Schottky groups \cite{Keen}.

In Theorem \ref{topo}, we provide the number of irreducible components of $F(g,p;t,r,s)$. This, in particular, gives us upper bounds for the number of its connected components. As different irreducible components might intersect, such an upper bound could be bigger than the number of its connected components (see Theorem \ref{maintheo}).

As a matter of completeness, in Section \ref{Sec:consecuencias}, we describe how to count the number of different cyclic-Schottky strata for fixed genus $g$. Explicit formulae are given for $p \in \{2,3\}$ and a short algorithm is provided for a general situation.

\section{Main results}
Before stating our main results, we need to recall some definitions. Let $K$ be a Kleinian group.
By a {\it geometric automorphism} of $K$ we mean a quasiconformal homeomorphism of the Riemann sphere $\widehat{\mathbb C}$ that self-conjugates it. Two subgroups of $K$ are called {\it geometrically equivalent in $K$} if there exists a geometric automorphism of $K$ conjugating them.

Let us now assume that $K$ is a cyclic-Schottky group of type $(g,p;t,r,s)$ (recall that $p \geq 2$ is always assumed to be a prime integer). 
If $p=2$, then in \cite{DGGH} it was proved that any two Schottky subgroups of index two of $K$ are geometrically equivalent. 
Our first result provides the number of geometrical equivalence classes of Schottky normal subgroups of index $p$ in $K$.

\begin{theo}\label{topo}
Let $(g,p;t,r,s)$ be an admissible tuple and let $K$ be a cyclic-Schottky group of type $(g,p;t,r,s)$. Then 
the number of index $p$ normal subgroups of $K$, up to geometric automorphisms of $K$, which are Schottky groups  (necessarily of rank $g$)
 is equal to  
$$M(g,p;t,r,s)=\left\{\begin{array}{cl}
1 & ; p=2.\\
\\
\left( \begin{array}{c}
r+(p-3)/2 \\
(p-3)/2
\end{array}
\right)
\left( \begin{array}{c}
s+(p-3)/2 \\
(p-3)/2
\end{array}
\right) & ; p \geq 3.
\end{array}
\right.
$$
\end{theo}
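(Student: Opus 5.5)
The plan is to turn the count into a count of surjective homomorphisms $\theta\colon K\to{\mathbb Z}_{p}$ modulo an explicit group action. I will use the structural description of Section~\ref{Sec:cyclicschottky}. By the Klein--Maskit combination theorems, $K$ is the free product of $t$ cyclic groups $\langle A_i\rangle$ generated by loxodromics, $r$ cyclic groups $\langle E_j\rangle$ of order $p$, and $s$ groups $\langle B_k,F_k\rangle\cong {\mathbb Z}\oplus{\mathbb Z}_{p}$, where $B_k$ is loxodromic and $F_k$ is elliptic of order $p$ commuting with it. An index $p$ normal subgroup is $\ker\theta$ for such a $\theta$, and $\theta$ is determined by its kernel up to the action of ${\mathbb Z}_p^{*}$. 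The kernel is torsion free iff $\theta$ is injective on every finite subgroup, that is, iff $\theta(E_j)\neq 0$ and $\theta(F_k)\neq 0$. A torsion-free finite-index subgroup of a geometrically finite function group of this Schottky type is a Schottky group; its rank is then $g$ by the Riemann--Hurwitz count already recorded in the admissibility relation. So the objects to count are the $\theta$ with nonzero values on all elliptic generators, modulo ${\mathbb Z}_p^{*}$ and modulo the action of the geometric automorphism group of $K$. For $p=2$ all such $\theta$ are equivalent, which is \cite{DGGH}, so the work is for $p\ge 3$.

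\emph{Normal form.} I will use explicit geometric automorphisms, each realised by a quasiconformal homeomorphism built on a fundamental domain: half-twists that interchange two consecutive free factors of the same kind, Dehn twists and handle slides along the loxodromic generators, and, in the ${\mathbb Z}\oplus{\mathbb Z}_p$ factors, the move $B_k\mapsto B_kF_k^{m}$. With these moves, the value of $\theta$ on each $A_i$ and each $B_k$ can be made $0$. The values on the $E_j$ can be permuted arbitrarily among themselves, and likewise the values on the $F_k$. Using an orientation-reversing reflection of a factor, or a conjugation that replaces an elliptic generator by its inverse, each value $\theta(E_j)$ or $\theta(F_k)$ can also be replaced by its negative. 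The result is that every admissible $\theta$ is equivalent to one determined by a multiset of size $r$ and a multiset of size $s$, both with elements in ${\mathbb Z}_p^{*}/\{\pm1\}$, a set of $(p-1)/2$ elements. The number of such pairs of multisets is exactly $\binom{r+(p-3)/2}{(p-3)/2}\binom{s+(p-3)/2}{(p-3)/2}$.

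\emph{Invariance and the main obstacle.} The hard step is showing that distinct normal forms are not geometrically equivalent. The invariant I would use is this: a geometric automorphism of $K$ sends each maximal elliptic cyclic subgroup to a conjugate of a maximal elliptic cyclic subgroup. It preserves the type of that subgroup, namely whether it is a free factor $\langle E_j\rangle$ or lies in a ${\mathbb Z}\oplus{\mathbb Z}_p$ factor, because only the latter have loxodromic centralizer. Up to orientation, it also preserves the rotation angle. Hence the multiset of values $\pm\theta(\text{primitive elliptic})$ on each of the two types is invariant. The delicate point is the residual action of ${\mathbb Z}_p^{*}$ that rescales $\theta$ without changing the kernel. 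I expect to show that this rescaling is already induced by geometric automorphisms once rotation angles are tracked through the fixed-point data. If that fails, I will have to recheck how the paper normalises the pair $(K,\ker\theta)$, since a genuine quotient by ${\mathbb Z}_p^{*}/\{\pm1\}$ would change the formula. I would settle this first in the simplest case $r=1$, $s=0$ before proving the general invariance.
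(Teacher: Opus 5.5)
There is a genuine gap, and it is exactly at the step you deferred. The rescaling $\theta\mapsto\lambda\theta$ with $\lambda\in\mathbb{Z}_p^{*}\setminus\{\pm1\}$ is \emph{never} induced by a geometric automorphism, and your own invariant proves it.

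Normalize every $E_j$ and $F_k$ to have rotation angle $2\pi/p$. Pre-composition with a geometric automorphism then preserves the multisets $\{\pm\theta(E_j)\}$ and $\{\pm\theta(F_k)\}$ in $\mathbb{Z}_p^{*}/\{\pm1\}$. By contrast, $\lambda\theta$ multiplies both multisets by $\lambda$; for $p\geq5$ and $r+s\geq1$ the constant multisets already lie in a non-trivial orbit. Since $\ker(\lambda\theta)=\ker\theta$, your normal forms do not separate kernels.

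Your proposed test case makes this concrete. For $r=1$, $s=0$, every admissible $\theta$ is equivalent to one with $\theta(A_i)=0$ and $\theta(E_1)=u$. All of these have the same kernel: the words whose $E_1$-exponent sum is $\equiv 0 \bmod p$. So there is exactly one class of index-$p$ normal Schottky subgroups, while the formula gives $(p-1)/2\geq 2$.

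In general, the number of kernel classes is the number of orbits of $\mathbb{Z}_p^{*}/\{\pm1\}$, acting diagonally by multiplication on pairs of multisets. This is strictly smaller than $M(g,p;t,r,s)$ whenever $p\geq5$ and $r+s\geq1$. So the statement, read as a count of subgroups, is false in those cases, and no completion of your plan can prove it.

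Your normal form and your invariance argument are the paper's Steps 2--4. The invariance part (maximal finite subgroups go to conjugates of maximal finite subgroups, type detected by centralizers, rotation angle preserved up to sign) is more explicit than the paper's brief appeal to an outside count. The paper's proof has the same hole. It first declares $\Phi_1,\Phi_2$ equivalent iff $A\circ\Phi_1=\Phi_2\circ\Psi$ for some $A\in{\rm Aut}(\mathbb{Z}_p)$. But Step 4 then counts homomorphisms only ``up to pre-composing by a geometric automorphism of $K$'', silently dropping $A$. What both arguments actually establish is the number of surjections $\theta$ modulo geometric automorphisms, equivalently pairs consisting of $\Gamma$ and a chosen generator of $K/\Gamma$. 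This agrees with the number of subgroups only when $p\leq 3$ or $r=s=0$.

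Two smaller corrections.
\begin{enumerate}
\item When $r=s=0$ you cannot make all $\theta(A_i)=0$, by surjectivity. You need Nielsen moves on $\langle A_1,\dots,A_t\rangle$, with $t\geq2$, to see that all surjections are equivalent.
\item Geometric automorphisms are quasiconformal, hence orientation preserving, so a reflection is not available. Also, $E_j$ is not conjugate to $E_j^{-1}$ inside $K$, so an inner conjugation cannot change the sign either. The sign change must come from a half-twist exchanging the two endpoints of the cone arc of $E_j$. For $F_k$ it comes from a $\pi$-rotation of the solid torus, which also inverts $B_k$.
\end{enumerate}
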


\begin{rema}\label{obs1}
Note that $M(g,p;t,r,s)$ is independent of $t$.  If either (i) $p=2,3$ or (ii) $p \geq 5$ and $r=s=0$, then a cyclic-Schottky group of type $(g,p;t,r,s)$ has exactly one Schottky subgroup of index $p$, up to geometrical equivalence.  An interpretation of $M(g,p;t,r,s)$, in terms of ${\mathbb Z}_{p}$-covers of handlebodies, is given in Remark \ref{unicidad}.
\end{rema}

As a consequence of Theorem \ref{topo} and Remark \ref{obs1} one has the following.

\begin{coro}
$F(g,p;t,r,s)$ has exactly $M(g,p;t,r,s)$ irreducible components, in particular, at most $M(g,p;t,r,s)$ connected components.
\end{coro}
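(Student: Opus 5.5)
The plan is to identify the irreducible components of $F(g,p;t,r,s)$ with the geometric equivalence classes of Schottky normal subgroups of index $p$ in a fixed cyclic-Schottky group $K$ of type $(g,p;t,r,s)$, and then to invoke Theorem \ref{topo}.

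First, fix such a $K$. For each Schottky normal subgroup $\Gamma \lhd K$ of index $p$, define the map
$$\Phi_{\Gamma}\colon {\mathcal Q}(K) \to {\mathcal S}_{g}, \qquad [f] \mapsto [f\Gamma f^{-1}],$$
where $f$ is a quasiconformal homeomorphism conjugating $K$ onto another Kleinian group. This map is well defined and holomorphic, since ${\mathcal Q}(K)$ embeds in ${\mathcal Q}(\Gamma)$ as the locus of deformations compatible with $K$, and ${\mathcal S}_{g}={\mathcal Q}(\Gamma)/{\rm Out}(F_{g})$. Its image $F_{\Gamma}$ is connected. By fact (i) of the Introduction (any two cyclic-Schottky groups of the same type are quasiconformally conjugate), every point of $F(g,p;t,r,s)$ lies in some $F_{\Gamma}$. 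Together with the finiteness of index $p$ normal subgroups of the finitely generated group $K$, this gives
$$F(g,p;t,r,s)=\bigcup_{\Gamma} F_{\Gamma},$$
a finite union. Moreover, $\Phi_{\Gamma}$ factors through ${\mathcal Q}(K)/{\rm Mod}(K)$, which is the orbifold description of the irreducible components recalled in the Introduction. The map is generically injective: for a generic point of ${\mathcal Q}(K)$, the normalizer of the deformed $\Gamma$ is exactly the deformed $K$, by a dimension count using Remark \ref{obs:dimension}. So each $F_{\Gamma}$ is an irreducible component.

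Second, I show that $F_{\Gamma_{1}}=F_{\Gamma_{2}}$ if and only if $\Gamma_{1}$ and $\Gamma_{2}$ are geometrically equivalent in $K$.

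\emph{If.} A geometric automorphism $h$ of $K$ with $h\Gamma_{1}h^{-1}=\Gamma_{2}$ gives
$$\Phi_{\Gamma_{2}}([f])=\Phi_{\Gamma_{1}}([f\circ h]),$$
and precomposition with $h$ is an automorphism of ${\mathcal Q}(K)$, so the two images coincide.

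\emph{Only if.} Suppose the images coincide. Take a generic point $[f]$, whose normalizer is $fKf^{-1}$ by the generic injectivity above. Then $f\Gamma_{1}f^{-1}$ is conjugate in ${\rm PSL}_{2}({\mathbb C})$ to $f'\Gamma_{2}f'^{-1}$ by some M\"obius $A$. Conjugation by $A$ carries normalizers to normalizers, so $A$ sends $fKf^{-1}$ to $f'Kf'^{-1}$. Hence $h=f'^{-1}Af$ is a geometric automorphism of $K$ sending $\Gamma_{1}$ to $\Gamma_{2}$.

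Combining the two steps, the number of irreducible components is exactly the number of geometric equivalence classes, which is $M(g,p;t,r,s)$ by Theorem \ref{topo}. The bound on connected components follows, since each component is connected, though distinct components may intersect.

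The main obstacle is the generic-normalizer claim: that for generic $[f]$, the deformed $\Gamma$ is normal in no Kleinian group strictly larger than $fKf^{-1}$, and that no extra Möbius symmetry arises. I would first try a dimension argument. Any Kleinian group $K'$ properly containing $fKf^{-1}$ as a finite-index subgroup has a deformation space that is a proper closed analytic subset of ${\mathcal Q}(K)$. There are only countably many combinatorial types of such $K'$ (by the structure theory of finite extensions of Schottky groups). Their union is therefore nowhere dense, and its complement gives the generic points.
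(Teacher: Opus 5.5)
\paragraph{Comparison with the paper.} The paper gets this corollary directly from Theorem \ref{topo}. It takes the irreducible components to be the images $F_j$ of ${\mathcal Q}(K)$, one for each geometric class of Schottky subgroup $\Gamma_j$, and reads off their number. It gives no separate argument that distinct classes produce distinct images. Your proposal goes further by trying to prove that distinctness.

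Your ``if'' direction and your covering $F(g,p;t,r,s)=\bigcup F_\Gamma$ are fine. The ``only if'' direction, however, rests on a false lemma. It is \emph{not} true that, for generic $[f]$, the normalizer $N(f\Gamma f^{-1})$ in ${\mathbb M}$ equals $fKf^{-1}$. Your dimension argument for the lemma also fails: a proper finite-index over-group of $K$ can have a deformation space of the same dimension, the analogue of Singerman's non-maximal signatures.

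\paragraph{Counterexample.} Take type $(p-1,p;0,2,0)$, so $K=\langle E_1\rangle*\langle E_2\rangle$, with $E_1$ fixing $\{a,b\}$ and $E_2$ fixing $\{c,d\}$. Let $\tau$ be the M\"obius involution with $\tau(a)=b$ and $\tau(c)=d$. Then $\tau E_j\tau=E_j^{-1}$. Hence $\tau$ normalizes $K$, and it normalizes every $\ker\Phi$, because $\Phi\circ c_\tau$ equals $\Phi$ composed with inversion in $\mathbb{Z}_p$. Every deformation of $K$ has the same shape. So $N(f\Gamma f^{-1})\supsetneq fKf^{-1}$ at \emph{every} point of the $1$-dimensional space ${\mathcal Q}(K)$. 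Since $M(p-1,p;0,2,0)>1$ for $p\geq 5$, this is exactly a case where your distinctness argument is needed and its hypothesis fails.

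A smaller point: $\Phi_\Gamma$ factors only through ${\mathcal Q}(K)$ modulo the stabilizer of $\Gamma$ in ${\rm Mod}(K)$, not through ${\mathcal M}(K)$. Elements of ${\rm Mod}(K)$ that move $\Gamma$ turn $\Phi_\Gamma$ into $\Phi_{W^{-1}\Gamma W}$.

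\paragraph{How to repair it.} Ask for less. You only need that, for generic $[f]$, the group $fKf^{-1}/f\Gamma f^{-1}$ is a Sylow $p$-subgroup of $N(f\Gamma f^{-1})/f\Gamma f^{-1}$. In the ``only if'' step, put $\Gamma'=f'\Gamma_2f'^{-1}$. The groups $AfKf^{-1}A^{-1}$ and $f'Kf'^{-1}$ both project to subgroups of order $p$ in $N(\Gamma')/\Gamma'$. The first is the $A$-conjugate of a Sylow subgroup, so the $p$-part of $|N(\Gamma')/\Gamma'|$ is $p$, and both are Sylow. Sylow's theorem then gives $B\in N(\Gamma')$ conjugating one onto the other. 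Then $h=f'^{-1}BAf$ is a geometric automorphism of $K$ with $h\Gamma_1h^{-1}=\Gamma_2$.

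\paragraph{Proving the generic Sylow property.} You must show that an over-group $\widetilde K$ with $K\lhd\widetilde K$, $[\widetilde K:K]=p$, normalizing $\Gamma$, cannot persist on an open set. This needs an actual computation. One way is Riemann--Hurwitz for the $\mathbb{Z}_p$-cover $\Omega/K\to\Omega/\widetilde K$. The cone orders of $\Omega/\widetilde K$ lie in $\{p,p^2\}$ because $\Gamma$ is torsion-free, and $p\geq 5$. The computation gives $\dim{\mathcal Q}(\widetilde K)<\dim{\mathcal Q}(K)=3(t+s)-3+2r$.

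This is the same $p$-Sylow mechanism the paper uses in the proof of Theorem \ref{maintheo}(3). Symmetries of index $2$ or $4$, like $\tau$, are harmless for odd $p$; symmetries of index $p$ are what the dimension count rules out.
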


Two different irreducible components of $F(g,p;t,r,s)$ may intersect. We provide an example of intersection in Example \ref{ejemplo2}.
This implies that the number of connected components could be strictly smaller than $M(g,p;t,r,s)$.
Our second result concerns the connectivity of these cyclic-Schottky groups' strata and the number of their connected components.

\begin{theo}\label{maintheo} 
Let $(g,p;t,r,s)$ be an admissible tuple.

\noindent
(1) If either (i) $p=2,3$ or (ii) $p \geq 5$ and $r=s=0$, then $F(g,p;t,r,s)$ is connected.

\noindent
(2) If $p \geq 5$, then $F(g,p;t,r,s)$ has at most $M(g,p;t,r,s)$ connected components.

\noindent
(3) If $p \geq 5$ and either 
(i) $r \nequiv 0\; {\rm mod}(p)$ or (ii) $s \nequiv 0\; {\rm mod}(p)$,
then $F(g,p;t,r,s)$ is not connected and it has exactly $M(g,p;t,r,s)$ connected components.

\end{theo}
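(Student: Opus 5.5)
Throughout I use the structural description of a cyclic-Schottky group $K$ of type $(g,p;t,r,s)$ as a free product, in the Klein--Maskit sense, of $t$ cyclic groups generated by loxodromics, $r$ cyclic groups of order $p$ generated by elliptics $E_1,\dots,E_r$, and $s$ groups $\langle A_j,B_j\rangle\cong{\mathbb Z}\oplus{\mathbb Z}_p$ with $A_j$ loxodromic and $B_j$ elliptic of order $p$ commuting with $A_j$. An index $p$ normal subgroup $\Gamma$ is the kernel of a surjection $\theta\colon K\to{\mathbb Z}_p$. It is torsion free, hence Schottky, exactly when $\theta(E_i)\neq 0$ and $\theta(B_j)\neq 0$ for all $i,j$. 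By Theorem~\ref{topo}, the irreducible components of $F(g,p;t,r,s)$ are in bijection with the geometric equivalence classes of such $\theta$ modulo ${\rm Aut}({\mathbb Z}_p)$.

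For part (1) I argue as follows. When $M(g,p;t,r,s)=1$, which covers $p=2,3$ and also $p\geq5$ with $r=s=0$ by Remark~\ref{obs1}, there is a single irreducible component. That component is the image of the connected space ${\mathcal Q}(K)$, so it is connected. Part (2) is immediate from the Corollary, since a finite union of $M$ connected sets has at most $M$ components.

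The real content is part (3), and I expect it to be the main obstacle. Components can only merge if two irreducible components intersect, that is, if some Schottky group $\Gamma$ lies in two cyclic-Schottky groups $K_1,K_2$ of the same type with inequivalent data. I plan to attach to each irreducible component an invariant that is preserved along $F$ and separates the components.

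\emph{The invariant.} Write $\tau$ for the order-$p$ automorphism of $S=\Omega/\Gamma$ induced by $K$. The fixed points come in $2r$ cone points. Each $E_i$ contributes a pair of points with rotation numbers $\theta(E_i)$ and $-\theta(E_i)$, so the datum is really the multiset of classes $\pm\theta(E_i)\in{\mathbb Z}_p^*/\{\pm1\}$. The $B_j$ give a second multiset of classes $\pm\theta(B_j)$, and these are detected as rotation data on the handlebody boundary, the fixed circles of the extension of $\tau$ to the handlebody. The pair of multisets, modulo simultaneous multiplication by ${\mathbb Z}_p^*$, is a geometric invariant. Counting multisets of size $r$ and of size $s$ in a set of $(p-1)/2$ elements reproduces the binomial coefficients in $M(g,p;t,r,s)$, before the ${\mathbb Z}_p^*$ scaling. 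I would check that Theorem~\ref{topo} already accounts for that scaling through its choice of normalization of $\theta$.

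\emph{Separation.} Intersections between components arise from Schottky groups $\Gamma$ whose conformal automorphism group in the handlebody sense contains two distinct order-$p$ subgroups, or a single order-$p$ subgroup whose handlebody extension carries more symmetry. I would show that when $r\not\equiv0$ or $s\not\equiv0 \pmod p$, no point of $F(g,p;t,r,s)$ lies in two components. Suppose $\Gamma$ lies in $K_1$ and $K_2$ with the same $\langle\tau\rangle$. Then $K_1=K_2$, because the lift is determined by the group $\langle\tau\rangle$. If the generators differ, $\tau_2=\tau_1^{a}$, so the data differ by the scaling by $a$ and the components coincide.

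If the order-$p$ subgroups are distinct, they generate a larger finite group $H$ of handlebody automorphisms. That group is virtually free-product-like, with finite subgroups cyclic or dihedral or otherwise constrained. I would use the Riemann--Hurwitz and handlebody Euler-characteristic counts to show that some element of $H$ normalizing $\langle\tau_1\rangle$ must permute the $E$-type and $B$-type fixed data. Any such permutation preserves the multiset up to scaling, so the two data are equivalent. This contradicts the assumption that the components are distinct, except when the orbit-counting forces $p\mid r$ and $p\mid s$. That exceptional case is exactly the situation of Example~\ref{ejemplo2}, where a ${\mathbb Z}_p\times{\mathbb Z}_p$-type symmetry cyclically permutes blocks of $p$ fixed points.

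Making this last argument rigorous is the hardest step. I would first try to reduce it to the case $H\cong{\mathbb Z}_p\times{\mathbb Z}_p$, or a metacyclic group, acting on the handlebody. In that case the fixed point sets of $\tau_1$ are permuted freely in orbits of size $p$ by a complementary element, and this would force the congruences that hypothesis (3) excludes.
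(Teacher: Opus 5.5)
Your parts (1) and (2) agree with the paper. When $M(g,p;t,r,s)=1$ the stratum is the image of the connected space ${\mathcal Q}(K)$, and in general $F(g,p;t,r,s)=F_{1}\cup\cdots\cup F_{m}$ with every $F_{j}$ connected.

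Part (3) has a genuine gap. The separation step is only announced, and the idea that makes it work, Sylow theory in the finite group ${\rm Aut}(M_\Gamma)$, is missing. By Lemma~\ref{lemap5}, a point of $F_{j_1}\cap F_{j_2}$ with $j_1\neq j_2$ gives $\Gamma$ inside two groups $K_1\neq K_2$ with $H_i=K_i/\Gamma$ \emph{not conjugate} in ${\rm Aut}(M_\Gamma)$. Indeed, a conjugating automorphism would lift to a M\"obius transformation normalizing $\Gamma$ and carrying $K_1$ onto $K_2$, which forces $j_1=j_2$. You only distinguish equal from distinct subgroups, never conjugate from non-conjugate ones, and it is the latter distinction that matters.

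The paper's argument runs as follows. If $H_1$ were a Sylow $p$-subgroup, $H_2$ would be conjugate to it. Hence $H_1\subsetneq P$ for some Sylow $p$-subgroup $P$. Since normalizers grow in $p$-groups, there is $J\le P$ of order $p^{2}$ with $H_1\lhd J$. Then $J/H_1\cong{\mathbb Z}_p$ acts on ${\mathcal O}_1=M_\Gamma/H_1$ preserving its $r$ cone arcs and $s$ cone loops. For $p\geq 5$ it leaves none of them invariant, so $p\mid r$ and $p\mid s$.

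Your last paragraph has this endgame, but the route you propose, via $H=\langle\tau_1,\tau_2\rangle$ and Riemann--Hurwitz counts, does not reach it. $H$ need not be a $p$-group. An element of $H$ normalizing $\langle\tau_1\rangle$ need not exist outside $\langle\tau_1\rangle$, or it may have order prime to $p$ (an involution inverting $\tau_1$, say); then it yields no divisibility. Your logic is also inverted. Such an element only permutes the fixed data of $\tau_1$ among themselves and says nothing about the data of $\tau_2$, so it cannot show the two data are equivalent. The contradiction has to come from the divisibility.

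The invariant part of your plan also breaks at exactly the check you postpone. Step 4 of the proof of Theorem~\ref{topo} quotients the tuples $(u_j)$, $(v_k)$ only by sign changes and permutations, not by simultaneous ${\mathbb Z}_p^{*}$-scaling, even though a kernel determines $\Phi$ only up to that scaling. So your pairs of multisets modulo scaling do not reproduce $M(g,p;t,r,s)$; in general they separate fewer classes.

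For example, take $p=5$ and $(t,r,s)=(1,1,0)$. After the normalization $\Phi(A_1)=1$ of Step 2, every choice $\Phi(E_1)=a^{u}$ has the same kernel. So there is a single class, and the stratum is the image of one connected ${\mathcal Q}(K)$; it is connected although $r\not\equiv 0 \pmod 5$. The formula, however, gives $M=2$. Matching your invariant with $M$ is therefore not a formality to be checked, and it cannot be done in such cases.

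The paper does not use an invariant at all. It shows, by the Sylow argument above, that the $F_j$ are pairwise disjoint, and takes their number from Theorem~\ref{topo}.
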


\begin{rema}\label{obs2}
\begin{enumerate}
\item[(a)] The connectivity, for $p=2$, was proved in \cite{DGGH}.
\item[(b)] If $p \geq 5$ and $g \nequiv 1\; {\rm mod}(p)$, then $r \nequiv 0\; {\rm mod}(p)$; so part (3) of Theorem \ref{maintheo} asserts that $F(g,p;t,r,s)$ consists of exactly $M(g,p;t,r,s)$ connected components, that is, irreducible components are pairwise disjoint.
\item[(c)] In \cite{L-H} it was observed that there is a prime integer $p_{0}$ (depending on the triple $(t,r,s)$) so that, if $p \geq p_{0}$ and $S$ is a closed Riemann surface admitting $H \cong {\mathbb Z}_{p}$ as a group of conformal automorphisms such that $S/H$ has genus $\gamma=t+s$ and $2r$ cone points, then $H$ is the unique $p$-Sylow subgroup of ${\rm Aut}(S)$. In this situation, $F(g,p;t,r,s)$ will consist of exactly $M(g,p;t,r,s)$ connected components. 
\end{enumerate}
\end{rema}

\section{Preliminaries}\label{prelim}
We use the symbol $\Gamma<K$ (respectively, $\Gamma \lhd K$) to say that $\Gamma$ is a subgroup (respectively, a normal subgroup) of a group $K$. We denote by ${\mathbb M} = {\rm PSL}_{2}({\mathbb C})$ the group of {\it M\"obius transformations} (the full group of conformal automorphisms of $\widehat{\mathbb C}$). 
Each M\"obius  transformation acts (by Poincar\'e's extension) as an orientation-preserving isometry of the hyperbolic $3$-space ${\mathbb H}^{3}=\{(z,t) \in {\mathbb C} \times {\mathbb R}: t>0\}$ with the hyperbolic metric $ds^{2}=(|dz|^2+dt^2)/t^2$. 
The composition of two maps $f$ and $h$ is, as usually, denoted by the symbol $f \circ h$, but if we are composing M\"obius transformations $A$ and $B$ we will write $AB$.

\subsection{Kleinian groups}
 A  {\it Kleinian group} is a discrete subgroup $K$ of ${\mathbb M}$. We say that 
$K<{\mathbb M}$ {\it acts discontinuously} at $p \in \widehat{\mathbb C}$ if there is an open neighborhood $U$ of $p$ such that,
up to finitely many elements $A \in K$, it holds that 
$A(U) \cap U = \emptyset$ (in particular, the $K$-stabilizer of $p$ is finite). The {\it region of discontinuity} of $K$ is the open set $\Omega \subset \widehat{\mathbb C}$ (which might be empty) consisting of those points on which $K$ acts discontinuously; its complement $\Lambda=\widehat{\mathbb C} \setminus\Omega$ is its {\it limit set} (if $\Lambda$ is finite, then $K$ is called {\it elementary}; otherwise {\it non-elementary}).  

\begin{rema}
If $K$ is a Kleinian group and $\Omega$ is its region of discontinuity, then $M_{K}=({\mathbb H}^{3} \cup \Omega)/K$ is a $3$-dimensional orbifold; its interior ${\mathbb H}^{3}/K$ carries a hyperbolic structure and, if $\Omega \neq \emptyset$, its conformal boundary $S_{K}=\Omega/K$ a Riemann orbifold structure (a Riemann surface with cone points). If $K$ is a torsion-free Kleinian group, then $M_{K}$ is a manifold (with boundary if $\Omega \neq \emptyset$) whose interior is a hyperbolic $3$-manifold, and $S_{K}$ is a Riemann surface (without cone points). If $K$ has a finite-sided fundamental polyhedron for its action on ${\mathbb H}^{3}$, then it is called {\it geometrically finite}.
\end{rema}

If $K_{1}<K_{2}<{\mathbb M}$ and $K_{1}$ have finite index in $K_{2}$, then one is discrete if and only if the other is (in which case both have the same region of discontinuity). There are examples of Kleinian groups with empty regions of discontinuity; for instance ${\rm PSL}_{2}({\mathbb Z}[i])$. 
In this paper, we will only consider Kleinian groups with a non-empty and connected region of discontinuity. Finitely generated Kleinian groups with an invariant connected component of the region of discontinuity are called {\it function groups} and their geometrical structure, in the sense of Klein-Maskit Combination Theorems, is provided in \cite{Maskit:function2, Maskit:function1, Maskit:function}. Generalities on  Kleinian groups can be found, for instance,  in the books \cite{Maskit:book, MT}.

\subsection{Schottky groups}\label{Sec:Schottkygroups}
Schottky groups are particular examples of Kleinian groups which are obtained from the Klein-Maskit Combinations Theorem by amalgamating several times cyclic groups generated by loxodromic transformations.

A {\it Schottky group of rank $g \geq 1$} is a Kleinian group $\Gamma$ generated by loxodromic
transformations $A_1,\ldots,A_g$, so that there are $2g$ pairwise disjoint simple loops,
$\Sigma_1,\Sigma'_1,\ldots,\Sigma_g, \Sigma'_g$, bounding a $2g$-connected domain ${\mathcal D}\subset \widehat{\mathbb C}$, where $A_i(\Sigma_i)=\Sigma'_i$, and $A_i({\mathcal D})\cap {\mathcal D}=\emptyset$, for $i=1,\ldots,g$. The collection of loops $\Sigma_{1}, \Sigma'_{1},\ldots,\Sigma_{g}, \Sigma'_{g}$ is called a {\it fundamental set of loops}  for $\Gamma$ with respect to the above generators. 
Its region of discontinuity $\Omega$ is connected (so $\Gamma$ is a function group) and dense in $\widehat{\mathbb C}$, the quotient $S_{\Gamma}=\Omega/\Gamma$ is a closed Riemann surface of genus $g$ (the classical retrosection theorem states that, up to conformal isomorphism, every closed Riemann surface of genus $g$ is obtained in this way). The  
$3$-manifold $M_{\Gamma}=({\mathbb H}^{3} \cup \Omega)/\Gamma$ is homeomorphic to a handlebody of genus $g$ (conversely, every torsion free Kleinian group $\Gamma$, for which $M_{\Gamma}$ is homeomorphic to a handlebody of genus $g$, is a Schottky group of rank $g$).

\begin{rema}
In \cite{Chuckrow}, Chuckrow proved that for any set of $g$ generators of a Schottky group of rank $g \geq 1$ there exists a corresponding fundamental set of loops.
In \cite{Maskit:Schottky groups}, Maskit proved that a Schottky group of rank $g$ is the same as a purely loxodromic Kleinian group, of the second, which isomorphic to a free group of rank $g$. 
From the above geometrical construction, any two Schottky groups of the same rank are quasiconformally conjugated.
\end{rema}

\subsection{Cyclic-Schottky groups}\label{Sec:cyclicschottky}
A {\it $p$-cyclic-Schottky group} is a Kleinian group $K$ containing  a Schottky group $\Gamma$
as a normal subgroup such that $K/\Gamma$ is a cyclic group of order $p$. A geometrical structure picture (similar as for Schottky groups) of $p$-cyclic-Schottky groups, in terms of Klein-Maskit Combination Theorems,  was obtained in \cite{Hidalgo:CyclicSchottky}. Below we recall it for the case when $p \geq 2$ is a prime integer.

\begin{theo}[\cite{Hidalgo:CyclicSchottky}]\label{teo1}
Let $K$ be a $p$-cyclic-Schottky group, where $p \geq 2$ is a prime integer.
Then there is an admissible tuple $(g,p;t,r,s)$ such that $K$
can be constructed, by Klein-Maskit Combination Theorem, as the free product of ``$t$" 
cyclic groups, each one generated by a loxodromic transformation, ``$r$" cyclic groups, each one generated by an elliptic transformation of order $p$, and ``$s$" abelian groups isomorphic to $\mathbb{Z}\oplus \mathbb{Z}_{p}$,
each one generated by a loxodromic transformation and an elliptic transformation of
order $p$ (in this case, both share the same fixed points). In this case, we say that $K$ is a cyclic-Schottky group of type $(g,p;t,r,s)$.
\end{theo}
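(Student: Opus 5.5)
The plan is to describe $K$ using the structure theory of function groups, and then read off the numbers $t,r,s$ and the relation $g=p(t+r+s-1)+1-r$ from an Euler characteristic count.

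\textbf{Basic properties of $K$.} Since $\Gamma$ has finite index in $K$, the two groups share the region of discontinuity $\Omega$. So $\Omega$ is connected and $K$ is a finitely generated function group. It is geometrically finite, because $\Gamma$ is and finite index preserves this. It has no parabolic elements: a parabolic $P\in K$ would give $P^{p}\in\Gamma$ parabolic, which is impossible since $\Gamma$ is purely loxodromic.

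\textbf{Finite subgroups.} Every finite subgroup of $K$ meets the torsion-free group $\Gamma$ trivially. Hence it injects into $K/\Gamma\cong{\mathbb Z}_{p}$, so it is trivial or cyclic of order $p$. In particular every elliptic element of $K$ has order $p$.

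\textbf{Decomposition into elementary factors.} Next I apply Maskit's decomposition of geometrically finite function groups without parabolics \cite{Maskit:function2, Maskit:function1, Maskit:function}. It expresses $K$, by Klein--Maskit combinations, as amalgamated free products and HNN extensions of basic groups along finite cyclic (here ${\mathbb Z}_{p}$) or trivial subgroups.

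\emph{Excluding non-elementary basic groups.} A Fuchsian or quasi-Fuchsian basic subgroup would contain a finite-index surface group inside the free group $\Gamma$, which is impossible. So every basic group is elementary without parabolics: trivial, loxodromic cyclic ${\mathbb Z}$, elliptic ${\mathbb Z}_{p}$, or the stabilizer of a loxodromic axis. In the last case the elliptic part has order $p$, and the group is ${\mathbb Z}\oplus{\mathbb Z}_{p}$ with common fixed points. A dihedral-type stabilizer is excluded because it would contain a finite non-cyclic subgroup.

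\emph{Removing amalgamations along ${\mathbb Z}_{p}$.} This is the step I expect to be the main obstacle. My plan is to work with an equivariant system of discs in the handlebody $M_{\Gamma}$ for the ${\mathbb Z}_{p}$-action, using the equivariant Dehn lemma (Meeks--Yau) or Zimmermann's classification of finite group actions on handlebodies \cite{Zimmermann}. Discs whose stabilizer is ${\mathbb Z}_{p}$ cut along elliptic axes. An amalgamation $A*_{{\mathbb Z}_{p}}B$ of two ${\mathbb Z}_{p}$-factors along ${\mathbb Z}_{p}$ simply identifies them. An HNN extension conjugating a ${\mathbb Z}_{p}$ to itself produces the ${\mathbb Z}\oplus{\mathbb Z}_{p}$ factor: the stable letter must commute with the elliptic, since $K/\Gamma$ is abelian and the action on the fixed-point pair has order $p$. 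After these reductions, the remaining pieces are combined only by free products (Klein combination). This yields $t$ loxodromic cyclic factors, $r$ factors ${\mathbb Z}_{p}$, and $s$ factors ${\mathbb Z}\oplus{\mathbb Z}_{p}$.

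\textbf{Counting.} Finally, compute the rational Euler characteristic. It is additive over free products: $\chi(A*B)=\chi(A)+\chi(B)-1$. With $\chi({\mathbb Z})=\chi({\mathbb Z}\oplus{\mathbb Z}_{p})=0$ and $\chi({\mathbb Z}_{p})=1/p$, we get
$$\chi(K)=\frac{r}{p}-(t+r+s-1).$$
Since $[K:\Gamma]=p$ and $\chi(\Gamma)=1-g$, the identity $1-g=p\,\chi(K)$ gives $g=p(t+r+s-1)+1-r$. Therefore $(g,p;t,r,s)$ is admissible.
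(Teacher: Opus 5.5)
\textbf{The paper's status and what holds up.} The paper gives no proof of this statement. It is quoted from Hidalgo's earlier work, and the only trace of that argument here is the orbifold picture in Remark~\ref{unicidad}: $r$ singular arcs and $s$ singular circles in $M_{\Gamma}/\mathbb{Z}_{p}$. Judged on its own, your outline is sound in these parts:
the preliminary steps (finite generation, geometric finiteness, no parabolics, finite subgroups of order $1$ or $p$, exclusion of non-elementary basic groups);
the final count $\chi(K)=r/p-(t+r+s-1)$ with $1-g=p\,\chi(K)$.

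\textbf{The gap.} It is in the step you flagged. Your two reductions are collapsing $\mathbb{Z}_{p}*_{\mathbb{Z}_{p}}\mathbb{Z}_{p}$ and a single HNN extension of $\mathbb{Z}_{p}$ over itself. They do not justify the claim that only free products remain afterwards. Nothing in your argument stops the decomposition from attaching several $\mathbb{Z}_{p}$-edges to the same elliptic subgroup $\langle x\rangle$. Examples are two HNN extensions of $\langle x\rangle$, an HNN extension of a $\mathbb{Z}\oplus\mathbb{Z}_{p}$ vertex, or $(\mathbb{Z}\oplus\mathbb{Z}_{p})*_{\mathbb{Z}_{p}}(\mathbb{Z}\oplus\mathbb{Z}_{p})$. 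By your own commutation argument these give $\mathbb{Z}_{p}\times F_{k}$ with $k\geq 2$. That group has every property you use: a free normal subgroup $F_{k}$ with quotient $\mathbb{Z}_{p}$, all finite subgroups trivial or $\mathbb{Z}_{p}$, and stable letters centralizing $x$. Yet it has nontrivial center, so it is not a free product of the allowed factors. Some input from discreteness is therefore needed, and the outline never provides it.

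\textbf{How to close it.} The missing ingredient is the centralizer of an elliptic element. Let $x\in K$ have order $p$ and fixed points $a,b$. Any $y$ commuting with $x$ preserves $\{a,b\}$.
\begin{enumerate}
\item If $y$ swapped $a$ and $b$, then $yxy^{-1}=x^{-1}$, which forces $p=2$.
\item Such a $y$ would be an involution: since $K$ has no parabolics, $y^{2}\neq 1$ would force $\mathrm{Fix}(y)=\mathrm{Fix}(y^{2})=\{a,b\}$.
\item Then $\langle x,y\rangle\cong\mathbb{Z}_{2}\oplus\mathbb{Z}_{2}$, which is excluded.
\end{enumerate}
So $C_{K}(x)$ fixes $a$ and $b$. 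It is then a discrete subgroup of $\mathbb{C}^{*}$ containing $x$, i.e.\ $\mathbb{Z}_{p}$ or $\mathbb{Z}\oplus\mathbb{Z}_{p}$.

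Now take a connected component of the subgraph of $\mathbb{Z}_{p}$-edges. Its fundamental group injects into $K$ and centralizes the common $\mathbb{Z}_{p}$, since vertex groups are abelian and stable letters commute with $x$. Hence it is $\mathbb{Z}_{p}$ or $\mathbb{Z}\oplus\mathbb{Z}_{p}$. Collapsing these components leaves only trivial edge groups, and the free product follows.

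Equivalently, on the handlebody side: all isotropy in $M_{\Gamma}/\mathbb{Z}_{p}$ is $\mathbb{Z}_{p}$, so the singular locus is a $1$-manifold. Each $\mathbb{Z}_{p}$ orbifold $0$-handle then meets at most two $\mathbb{Z}_{p}$ $1$-handles. The components are paths (giving $\mathbb{Z}_{p}$) or cycles (giving $\mathbb{Z}\oplus\mathbb{Z}_{p}$), which is exactly the picture of Remark~\ref{unicidad}.

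On either route you must also carry the structure loops, or the lifted invariant disc system, through the collapses. The statement asserts a Klein--Maskit realization, not just an abstract free product.

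\textbf{A smaller slip.} For $p=2$, the infinite dihedral stabilizer of a loxodromic axis has only cyclic finite subgroups, so it is not excluded as you claim. This is harmless only because it is itself the Klein combination $\mathbb{Z}_{2}*\mathbb{Z}_{2}$, which contributes to $r$.
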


\begin{rema}[Uniqueness of the structural description]\label{unicidad}
Let $g$ and $p$ be fixed. If $K$ is a cyclic-Schottky group of type $(g,p;t,r,s)$, then the values of $t$, $r$, and $s$ are uniquely determined by $K$. In fact, if ${\mathcal C} \subset {\mathcal O}^{3}_{K}$ is the locus of cone points (the branch values of the natural quotient map ${\mathbb H}^{3} \to {\mathcal O}^{3}_{K}$), then the number of connected components of ${\mathcal C}$ which are simple loops is exactly $s$ and the number of connected components of ${\mathcal C}$ which are simple arcs is $r$. 
The number $t+s$ is the genus of the conformal boundary $\Omega/K$, where $\Omega$ is the region of discontinuity of $K$. In these terms, $M(g,p;t,r,s)$ is the number of ${\mathbb Z}_{p}$-covers of the orbifold handlebody ${\mathcal O}^{3}_{K}$, up to topological equivalence.
\end{rema}

\subsection{Quasiconformal homeomorphisms}
Let $\Omega_{1}, \Omega_{2} \subset \widehat{\mathbb C}$ be non-empty domains. An orientation-preserving  homeomorphism
$W:\Omega_{1} \to \Omega_{2}$ is called {\it quasiconformal} if it satisfies the following two conditions:

\noindent
(i) $W$ has distributional partial derivatives, with respect to $z$ and $\overline{z}$, which can be represented by locally integrable functions $W_{z}$ and $W_{\overline{z}}$ on $\Omega_{1}$; and

\noindent
(ii) there is a measurable function $\mu:\Omega_{1} \to {\mathbb C}$ (called a {\it complex dilation} of $W$) with $\mu \in L^{\infty}_{1}(\Omega_{1})$ (i.e., 
$\|\mu\|_{\infty}<1$, where $\| \mbox{ } \|_{\infty}$ denotes the essential supreme norm),  and $W$ satisfies the Beltrami equation
$$W_{\overline{z}} (z)= \mu(z) W_{z}(z) \quad a.e. \; z \in \Omega_{1}.$$

The existence and uniqueness of quasiconformal homeomorphisms is due to Morrey \cite{Morrey} and the continuous variation of the (normalized) solutions was shown by Ahlfors-Bers  \cite{A-B}.

\begin{theo}[\cite{A-B,Morrey}] \label{A-B}
\mbox{}

\noindent
(1) (Existence). 
If $\mu \in L^{\infty}_{1}({\mathbb C})$, then there is a unique 
quasiconformal homeomorphism $W_{\mu}:\widehat{\mathbb C} \to \widehat{\mathbb C}$, with complex dilation $\mu$, satisfying  $$W_{\mu}(\infty)=\infty, \; W_{\mu}(0)=0, \; W_{\mu}(1)=1.$$

\noindent
(2) (Measurable Riemann Mapping's Theorem).
If $\mu \in L^{\infty}_{1}(\Omega_{1})$ varies continuously (in the Banach space $L^{\infty}({\mathbb C})$), then $W_{\mu}$ varies locally uniformly continuously in the space of continuous functions on $\widehat{\mathbb C}$.

\end{theo}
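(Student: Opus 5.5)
We will follow the classical route via the Beurling transform and a fixed-point (Neumann series) argument. Write $k=\|\mu\|_{\infty}<1$. Let $P$ denote the Cauchy transform $Ph(z)=-\frac{1}{\pi}\int h(\zeta)\left(\frac{1}{\zeta-z}-\frac{1}{\zeta}\right)dA(\zeta)$ and $T$ the Beurling transform, so that $(Ph)_{\overline z}=h$ and $(Ph)_{z}=Th$. Two analytic facts will be assumed. First, $T$ is an isometry of $L^{2}(\mathbb C)$. Second, $T$ is bounded on $L^{q}$ with norm $C_{q}\to 1$ as $q\to 2$, which follows from Calder\'on--Zygmund theory together with Riesz--Thorin interpolation. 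Fix $q>2$ with $kC_{q}<1$.

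\textbf{Existence.} Assume first that $\mu$ has compact support. Look for $W=z+Ph$. The Beltrami equation $W_{\overline z}=\mu W_{z}$ becomes $h=\mu+\mu Th$. Since $\|\mu T\|_{L^q\to L^q}\le kC_q<1$, the Neumann series $h=\sum_{n\geq 0}(\mu T)^{n}\mu$ converges in $L^{q}$. Because $q>2$, $Ph$ is H\"older continuous, so $W$ is continuous, and $W(z)=z+O(1)$ at $\infty$. The delicate point is that $W$ is a homeomorphism. For smooth $\mu$ we argue as follows. The function $\log W_{z}$ solves a similar linear equation, so $W_{z}\neq 0$. Hence the Jacobian $|W_{z}|^{2}(1-|\mu|^{2})$ is positive, $W$ is a local homeomorphism, and $W$ is proper of degree one, hence a homeomorphism. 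For general $\mu$ we approximate by mollified $\mu_{n}$ with $\|\mu_n\|_\infty\le k$. The maps $W_{\mu_n}$ converge locally uniformly by the $L^{q}$-continuity below. The limit is injective because the family of normalized $K$-quasiconformal maps is equicontinuous and their inverses, which are also $K$-quasiconformal, are equicontinuous as well. For $\mu$ without compact support, split $\mu=\mu\chi_{\mathbb D}+\mu\chi_{\mathbb C\setminus\mathbb D}$. Conjugate the exterior part by $z\mapsto 1/z$ so that it becomes compactly supported, solve, and compose the two solutions, using the chain rule for complex dilations. Finally, post-compose with a M\"obius transformation to obtain the normalization $W(0)=0$, $W(1)=1$, $W(\infty)=\infty$.

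\textbf{Uniqueness.} If $W_{1},W_{2}$ both have dilation $\mu$, then $W_{1}\circ W_{2}^{-1}$ has zero dilation almost everywhere. By Weyl's lemma, a $1$-quasiconformal map is conformal. So $W_{1}\circ W_{2}^{-1}$ is a M\"obius transformation fixing $0$, $1$ and $\infty$, hence the identity.

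\textbf{Continuous dependence.} Part (2) is handled in two steps. For compactly supported dilations with $\|\mu_{n}\|_{\infty}\le k'<1$, the difference of Neumann series satisfies
$$\|h_{\mu}-h_{\nu}\|_{q}\le C\|\mu-\nu\|_{\infty},$$
with $C$ depending only on $k'$ and the common support. The H\"older estimate for $P$ then gives locally uniform convergence of $W_{\mu_{n}}\to W_{\mu}$. To pass to the general case and to spherical uniform convergence, we use compactness of the family of normalized $K$-quasiconformal maps of $\widehat{\mathbb C}$. Every subsequential limit of $W_{\mu_{n}}$ is a normalized quasiconformal map, and its dilation equals $\mu$ by the local result applied after the splitting and inversion above. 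Uniqueness then forces the whole sequence to converge to $W_{\mu}$.

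The main obstacle is the homeomorphism property of the Neumann-series solution for non-smooth $\mu$, together with the $L^{q}$ bound $C_{q}\to 1$ for $T$. We will treat both by smooth approximation combined with equicontinuity of normalized $K$-quasiconformal maps; this is the step where care is needed.
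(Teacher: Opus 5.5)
The paper does not prove this theorem. It quotes it from Ahlfors--Bers and Morrey and uses it as a black box, so there is no argument in the paper to compare yours with. Your outline is the classical Ahlfors--Bers/Bojarski route: Beurling transform, Neumann series in $L^{q}$, smooth approximation, and reduction to compact support via $z\mapsto 1/z$. The overall strategy is sound. However, two limiting steps do not follow from the estimate you actually state.

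\textbf{Existence step.} You say the mollified $\mu_{n}$ give convergent $W_{\mu_{n}}$ ``by the $L^{q}$-continuity below''. The estimate below is $\|h_{\mu}-h_{\nu}\|_{q}\le C\|\mu-\nu\|_{\infty}$, and it cannot be used here. Mollifications converge in $L^{\infty}$ only when $\mu$ agrees a.e.\ with a continuous function. In general they converge only a.e., with $\|\mu_{n}\|_{\infty}\le k$ and supports in a fixed compact set $D$. What you need is the identity $h_{n}-h=\mu_{n}T(h_{n}-h)+(\mu_{n}-\mu)(1+Th)$. It gives $\|h_{n}-h\|_{q}\le(1-kC_{q})^{-1}\|(\mu_{n}-\mu)(1+Th)\|_{q}$, which tends to $0$ by dominated convergence because $(1+Th)\chi_{D}\in L^{q}$.

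\textbf{Part (2), general $\mu$.} The same problem reappears when $\mu$ has no compact support. Suppose you identify the dilation of a subsequential limit through the factorization $W_{\mu_{n}}=W_{\lambda_{n}}\circ f_{n}$, with $f_{n}=W_{\mu_{n}\chi_{\mathbb C\setminus\mathbb D}}$.
\begin{itemize}
\item The exterior factors are fine: after inversion their coefficients converge in $L^{\infty}$.
\item The interior coefficients $\lambda_{n}=\big(\mu_{n}(f_{n})_{z}/\overline{(f_{n})_{z}}\big)\circ f_{n}^{-1}$ are composed with moving maps. In general they do not converge in $L^{\infty}$; for instance, a jump curve of $\mu$ in $\mathbb D$ is moved by $f_{n}$. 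So your compact-support Lipschitz estimate does not apply to them.
\end{itemize}

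\textbf{A repair covering both points.} Let $W$ be a subsequential limit; by compactness it is a normalized $K$-quasiconformal map. The bound $(1-k^{2})\int_{E}|\partial W_{\mu_{n}}|^{2}\le|W_{\mu_{n}}(E)|$ keeps $\partial W_{\mu_{n}}$ and $\overline{\partial}W_{\mu_{n}}$ bounded in $L^{2}_{\mathrm{loc}}$. Hence they converge weakly to $\partial W$ and $\overline{\partial}W$. Now pass to the limit in
\[
\int\overline{\partial}W_{\mu_{n}}\phi=\int\mu\,\partial W_{\mu_{n}}\phi+\int(\mu_{n}-\mu)\,\partial W_{\mu_{n}}\phi .
\]
The last term is at most $\|(\mu_{n}-\mu)\phi\|_{2}\,\|\partial W_{\mu_{n}}\|_{L^{2}(\mathrm{supp}\,\phi)}\to 0$, under either a.e.\ or $L^{\infty}$ convergence. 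This gives $\overline{\partial}W=\mu\,\partial W$, and uniqueness then yields $W=W_{\mu}$. Alternatively, factor locally as $W_{\mu_{n}}=G_{n}\circ W_{\mu_{n}\chi_{U}}$, with $G_{n}$ conformal on $W_{\mu_{n}\chi_{U}}(U)$, and apply your estimate to $\mu_{n}\chi_{U}$.

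\textbf{Smaller points.}
\begin{itemize}
\item ``Compose the two solutions'' must mean $W_{\lambda}\circ W_{\mu\chi_{\mathbb C\setminus\mathbb D}}$. Here $\lambda=(\mu f'/\overline{f'})\circ f^{-1}$ on $f(\mathbb D)$, with $f=W_{\mu\chi_{\mathbb C\setminus\mathbb D}}$ conformal on $\mathbb D$. It is not $W_{\mu\chi_{\mathbb D}}$.
\item ``$\log W_{z}$ solves a similar equation'' is circular as phrased. For $\mu\in C^{1}_{0}$ one first solves $\sigma_{\bar z}=\mu\sigma_{z}+\mu_{z}$, and only then proves $W_{z}=e^{\sigma}$.
\item Your uniqueness step and the compactness of normalized $K$-quasiconformal maps rely on the geometric theory: inverses and compositions of quasiconformal maps are quasiconformal, the a.e.\ chain rule, and modulus estimates. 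These should be cited explicitly.
\end{itemize}
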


Kleinian groups are called {\it (quasiconformally) topologically conjugated} if there is a (quasiconformal) homeomorphism of the Riemann sphere that conjugates one onto the other.

\subsection{Quasiconformal deformation spaces of Kleinian groups}
Let $K$ be a finitely generated, non-elementary Kleinian group with a region of discontinuity $\Omega \neq \emptyset$ (we will be mainly interested in the case when $K$ is either a Schottky group or a cyclic-Schottky group). We proceed to recall the definitions of the quasiconformal deformation space of $K$, its modular group, and its corresponding moduli space (the specialist might skip this section and just return to it, if necessary, for consulting notations).

\subsubsection{Beltrami coefficients of $K$}
 Associated to $K$ is the Banach space $L^{\infty}(K)$ (with the essential supreme norm $\| \mbox{ } \|_{\infty}$) whose elements are those measurable functions $\mu:\widehat{\mathbb C} \to {\mathbb C}$ so that 
$$\left\{ \begin{array}{ll}
\mu(z)=0, & \forall z \in \Lambda=\widehat{\mathbb C}-\Omega\\
\\
\mu(k(z))\overline{k_{z}(z)}=
k_{z}(z) \mu(z), & \mbox{if $z\in \Omega$ (a.e.) and $k \in K$}.
\end{array}
\right.
$$

Let $L^{\infty}_{1}(K)$ be the open unit ball in $L^{\infty}(K)$; its elements are called the {\it Beltrami coefficients} of $K$. 
By Theorem \ref{A-B}, for each Beltrami coefficient $\mu \in L^{\infty}_{1}(K)$ there is a unique quasiconformal homeomorphism $W_{\mu}:\widehat{\mathbb C} \to \widehat{\mathbb C}$, with complex dilation $\mu$, that fixes $0$, $1$ and $\infty$. Now, for each $k \in K$, the element $k_{\mu}=W_{\mu} \circ k \circ {W_{\mu}}^{-1}$ is again a M\"obius transformation. If we set $K_{\mu}=W_{\mu} K {W_{\mu}}^{-1}$, then the above provides an isomorphism of Kleinian groups $\chi_{\mu}:K \to K_{\mu}: k \mapsto k_{\mu}$ (the image $W_{\mu}(\Omega)$ is the region of discontinuity of $K_{\mu}$). 

\subsubsection{The quasiconformal deformation space of $K$}
Two Beltrami coefficients $\mu_{1}, \mu_{2} \in  L^{\infty}_{1}(K)$ are called {\it quasiconformal equivalent} (we denoted it by $\mu_{1} \sim \mu_{2}$), if $\chi_{\mu_{1}}=\chi_{\mu_{2}}$. As the group $K$ is non-elementary, this is equivalent to say that $W_{\mu_{1}}$ and $W_{\mu_{2}}$ coincide on the limit set $\Lambda$ of $K$. The quotient space  ${\mathcal Q}(K)=L^{\infty}_{1}(K)/\hspace{-0.2cm}\sim$ is called the {\it quasiconformal deformation space} of $K$.  As a consequence of the Measurable Riemann Mapping's Theorem, ${\mathcal Q}(K)$ is connected. As the Kleinian group $K$ has been assumed to be finitely generated, it is well known that ${\mathcal Q}(K)$ is a complex manifold of finite dimension \cite{Maskit:selfmaps}. 

\subsubsection{The modular group and moduli space of $K$}
If $A(K)$ denotes the group of quasiconformal homeomorphism $W:\widehat{\mathbb C} \to \widehat{\mathbb C}$ such that $W K W^{-1}=K$ (i.e., geometric automorphisms of $K$) and $A_{0}(K)$ is its normal subgroup of those automorphisms isotopic to the identity, then the quotient group ${\rm Mod}(K)=A(K)/A_{0}(K)$ is called the {\it modular group} of $K$.
It acts on the quasiconformal deformation space ${\mathcal Q}(K)$ by the rule
$${\rm Mod}(K) \times {\mathcal Q}(K) \to {\mathcal Q}(K):([W],[\mu]) \mapsto [\nu],$$
where $\nu$ is a complex dilation of the quasiconformal homeomorphism $W_{\mu} \circ W^{-1}$, where $W_{\mu}$ is a quasiconformal homeomorphism with complex dilation $\mu$. This action is known to be a discrete action by holomorphic automorphisms of the complex manifold ${\mathcal Q}(K)$. The quotient  orbifold ${\mathcal M}(K)={\mathcal Q}(K)/{\rm Mod}(K)$
is the {\it moduli space} of $K$. One may think of the moduli space being 
defined by forgetting the marking, that is, the space of ${\rm PSL}_{2}({\mathbb C})$-conjugate classes of quasiconformal deformations of $K$. We denote by $\pi_{K}:{\mathcal Q}(K) \to {\mathcal M}(K)$ the associated Galois quotient map (branched coverings) induced by the modular group.

\subsection{Quasiconformal deformation space of Schottky groups}\label{Schottky}
If $\Gamma$ is a Schottky group of rank $g \geq 2$, then its quasiconformal deformation space ${\mathcal Q}(\Gamma)$ is a complex manifold of dimension $3(g-1)$ and its moduli space ${\mathcal M}(\Gamma)$ is a complex orbifold of the same dimension \cite{B}. Next, we recall classic models for ${\mathcal Q}(\Gamma)$ and ${\mathcal M}(\Gamma)$
called the marked Schottky space and the Schottky space, respectively.

\subsubsection{\bf The marked Schottky space ${\mathcal MS}_{g}$}\label{identifica}
A {\it marked Schottky group of rank $g$} is a tuple $(\Gamma;A_{1}, \dots,A_{g})$, where $\Gamma$ is a Schottky group of rank $g$ and $A_{1}$,..., $A_{g}$ is a set of generators of $\Gamma$. Two marked Schottky groups of rank $g$, say  $(\Gamma; A_{1}, \dots,A_{g})$ and $(\widehat{\Gamma};\widehat{A}_{1},\dots,\widehat{A}_{g})$,  are said to be {\it equivalent} if there is a M\"obius transformation $T$ so that $T A_{j} T^{-1}=\widehat{A}_{j}$, for every $j=1,\dots,g$. We denote by $[(\Gamma;A_{1},\dots,A_{g})]$ the equivalence class of the marked Schottky group $(\Gamma;A_{1}, \dots, A_{g})$. 
The space that parametrizes equivalence classes of marked Schottky groups of rank $g$ is called the 
{\it marked Schottky space of rank $g$}, denoted by ${\mathcal MS}_{g}$. Note that ${\mathcal MS}_{1}$ can be identified with the punctured unit disc.

If $g \geq 2$, then ${\mathcal MS}_{g}$ has a natural structure of a complex manifold of dimension $3(g-1)$. This essentially comes from the following normalization choice. Given a marked Schottky group $(\Gamma; A_{1}, \dots,A_{g})$, we may find a M\"obius transformation $T$ so that the attracting fixed points of $A_{1}$, $A_{2}$ and $A_{2}A_{1}$ are respectively $\infty$, $0$ and $1$. Then the repelling fixed points of $A_{1},\ldots,A_{g}$, together with the attracting fixed points of $A_{3},\ldots,A_{g}$ and the repelling fixed points of $A_{2}A_{1},\ldots,A_{g}A_{1}$ provide a global coordinate. Let us fix some Schottky group $\Gamma$ of rank $g$ and a set of generators of it, say $A_{1}$,..., $A_{g}$. Then the map 
$$\chi:{\mathcal Q}(\Gamma) \to {\mathcal MS}_{g}: 
[\mu] \mapsto [(\Gamma_{\mu}=W_{\mu} \Gamma W_{\mu}^{-1}; W_{\mu} A_{1} W_{\mu}^{-1},\dots,W_{\mu} A_{g} W_{\mu}^{-1}]$$ 
turns out to be an isomorphism \cite{Bers}; so ${\mathcal MS}_{g}$ is a model of the quasiconformal deformation of a Schottky group of rank $g$.

Earle \cite{Earle} proved that for $g \geq 2$ the group of analytic automorphisms of ${\mathcal MS}_{g}$ is isomorphic to the group ${\rm Out}(F_{g})$ of outer automorphisms of the free group $F_{g}$ of rank $g$. The action of ${\rm Out}(F_{g})$ on $[\Gamma; A_{1},...,A_{g}]$ is just the change of the set of generators, that is, the ${\rm Out}(F_{g})$-orbit of $[\Gamma; A_{1},...,A_{g}]$ consists of all elements of the form $[\Gamma; B_{1},...,B_{g}]$. The stabilizer of $[\Gamma; A_{1},...,A_{g}]$ in ${\rm Out}(F_{g})$ can be identified with the set of M\"obius transformations $T$ normalizing $\Gamma$, in particular, with a Kleinian group $K=\langle T, \Gamma\rangle$ containing $\Gamma$ as a finite index normal subgroup.

\subsubsection{\bf The Schottky space and its branch locus}
If $\Gamma$ is a Schottky group of rank $g \geq 2$, then we denote by $[\Gamma]$ the equivalent class of ${\rm PSL}_{2}({\mathbb C})$-conjugates of $\Gamma$. The space that parametrizes conjugacy classes of Schottky groups of rank $g$ is called the {\it Schottky space of rank $g$}, which we denote by ${\mathcal S}_{g}$. This space is a model for the 
moduli space of a Schottky group of rank $g$ and it is a complex orbifold of dimension $3(g-1)$ for $g \geq 2$.
In the above explicit models, the natural holomorphic branched cover map $\pi_{\Gamma}: {\mathcal Q}(\Gamma) \to {\mathcal M}(\Gamma)$ whose deck group is ${\rm Mod}(\Gamma)={\rm Out}(F_{g})$ is given by the forgetting generators map 
$\pi:{\mathcal MS}_{g} \to {\mathcal S}_{g}: [(\Gamma; A_{1},\dots,A_{g})] \mapsto [\Gamma].$
The branch locus of $\pi$ is exactly ${\mathcal B}_{g}$ and it consists of those $[\Gamma] \in {\mathcal S}_{g}$ for which there is a Kleinian group $K$ containing $\Gamma$ as a proper finite index normal subgroup. As already said in the introduction, ${\mathcal B}_{g}$ is the union of the sets $F(g,p;r,s,t)$ for all admissible tuples $(g,p;r,s,t)$ and $p$ prime.

\begin{rema}[{\it Quasiconformal deformation space of a cyclic-Schottky group}]\label{obs:dimension}
As a consequence of Theorem \ref{teo1}, the dimension of the quasiconformal deformation space of a cyclic-Schottky group of type $(g,p;t,r,s)$ equals to $(3g-3-r(p-3))/p$. This comes from the fact that a loxodromic transformation is determined by its two fixed points and its multiplier, an elliptic transformation of order $p$ is determined by its two fixed points and a choice of angle of the form $2q\pi/p$, where $q \in \{1,2,\ldots,p-1\}$. In particular, each irreducible component of $F(g,p;t,r,s)$ has dimension $(3g-3-r(p-3))/p$. A model of this quasiconformal deformation space, similar to the marked Schottky space, can be obtained as a consequence of Theorem \ref{teo1}; as we do not need it, we do not describe it here.
\end{rema}

\section{Proof of Theorem \ref{topo}}
Let us consider an admissible tuple $(g,p;t,r,s)$  and let $K$ be a cyclic-Schottky group of type $(g,p;t,r,s)$. By Theorem \ref{teo1}, we know that $K$ is 
constructed as the free product of 
$``t"$ cyclic groups generated by loxodromic transformations $A_{1},\dots,A_{t}$,
$``r"$ cyclic groups generated by elliptic transformations $E_{1},\dots,E_{r}$, each one of them of order $p$,  and $``s"$ abelian groups $H_{1},\dots,H_{s}$, where $H_{j} \cong {\mathbb Z} \oplus {\mathbb Z}_{p}$ is generated by a loxodromic transformation $T_{j}$ and an elliptic transformation $F_{j}$ of order $p$, so that $T_{j}F_{j}=F_{j}T_{j}$. By Remark \ref{unicidad}, the values of $t$, $r$ and $s$ are uniquely determined by $K$.

By Klein-Maskit Combination Theorems, a geometric automorphism of $K$ can only permute the generators $E_{1},\dots,E_{r}$ (up to conjugation by some element of $K$ and inversion) and can only permute the generators $F_{1},\dots,F_{s}$ (up to conjugation by some element of $K$ and inversion).
 
The normal subgroups of index $p$ of $K$ which are Schottky groups of rank $g$ are obtained as the kernel of a surjective homomorphism $\Phi:K \to \langle a \rangle \cong {\mathbb Z}_{p}$ with a torsion-free kernel. The torsion free condition is equivalent to have $\Phi(E_{j}),\Phi(F_{k}) \in \{a,a^{2},\dots,a^{p-1}\}$. Clearly, $\Phi$ is uniquely determined by its kernel, up to post-composition by an automorphism of ${\mathbb Z}_{p}$. 

Let $\Gamma_{1}, \Gamma_{2}$ be two Schottky groups of rank $g$, both of which are normal subgroups of index $p$ in $K$ and let $\Phi_{j}:K \to {\mathbb Z}_{p}$ be surjective homomorphisms with $\ker(\Phi_{j})=\Gamma_{j}$. Then $\Psi(\Gamma_{1})=\Gamma_{2}$ for a geometric automorphism $\Psi$ of $K$ if and only if  $A \circ \Phi_{1}=\Phi_{2} \circ \Psi$, for some $A \in {\rm Aut}({\mathbb Z}_{p}) \cong {\mathbb Z}_{p-1}$; in which case  we say that the surjective homomorphisms $\Phi_1$ and $\Phi_2$ are equivalent.
To obtain the desired result, we only need to count how many torsion-free kernel surjective homomorphisms $\Phi:K \to \langle a \rangle$, up to equivalence, are there.

\subsubsection*{Step 1: Starting data}
Let us start with a surjective homomorphism $\Phi:K \to {\mathbb Z}_{p}=\langle a: a^{p}=1\rangle$ whose kernel is torsion-free. As already observed above, the torsion-free condition implies that $\langle \Phi(E_{j})\rangle=\langle a\rangle=\langle \Phi(F_{k})\rangle$, for every $j=1,\dots,r$ and $k=1,\dots,s$.

\subsubsection*{Step 2: Applying some geometric automorphisms of $K$}
If $s>0$, then (by replacing each loxodromic generator $T_{k}$ by a new loxodromic transformation $F_{k}^{n_{k}}T_{k}$, for a suitable $n_{k}$), we may assume that $\Phi(T_{k})=1$. Note that this replacement is provided by a geometric automorphism of $K$.
Similarly, if $r>0$ or $s>0$, then (by replacing the loxodromic generator $A_{j}$ by a new loxodromic $E_{1}^{n_{j}}A_{j}$ or $F_{1}^{n_{j}}A_{j}$, for suitable $n_{j}$) we can assume that $\Phi(A_{j})=1$. This change is again produced by a geometric automorphism of $K$.
If $r=s=0$, then the surjectivity of $\Phi$ asserts there is some $j \in \{1,\dots,t\}$ so that 
$\Phi(A_{j}) \neq 1$. Now, by replacing the loxodromic generator $A_{i}$ ($i \neq j$) by the new loxodromic $A_{j}^{u_{i}}A_{i}$, for suitable $u_{i}$, we may assume that $\Phi(A_{i})=1$. This change is also produced by a geometric automorphism of $K$.

\subsubsection*{Step 3: A new equivalent homomorphism}
By Step 2, we may replace our starting homomorphism $\Phi$ with an equivalent one (which we still calling $\Phi$) so that:

\noindent
(1) if $r+s>0$, then $\Phi(A_{j})=\Phi(T_{k})=1$, for every $j=1,\dots,t$ and $k=1,\dots,s$; 

\noindent
(2) if $r=s=0$, then  $\Phi(A_{1}) \neq 1$ and $\Phi(A_{j})=1$, for every $j=2,\dots,t$.

\subsubsection*{Step 4: The case $r+s>0$}
The surjective homomorphisms $\Phi$ satisfies that  
$\Phi(A_{j})=\Phi(T_{k})=1$, for every $j=1,\dots,t$ and $k=1,\dots,s$. 
Let $\Phi(E_{j})=a^{u_{j}}$ and $\Phi(F_{k})=a^{v_{k}}$, where $u_{j},v_{k} \in \{1,2,\dots,p-1\}$, for each $j=1,\dots,r$ and each $k=1,\dots,s$. A post-composition, by a geometric automorphism of $K$, will change the tuples $(u_{1},\dots,u_{r})$ and $(v_{1},\dots,v_{s})$ into tuples $(\widehat{u}_{1},\dots,\widehat{u}_{r})$ and $(\widehat{v}_{1},\dots,\widehat{v}_{s})$, where $\widehat{u}_{j} \in \{u_{\sigma(j)},p-u_{\sigma(j)}\}$ for some permutation $\sigma \in {\mathfrak S}_{r}$ and $\widehat{v}_{k} \in \{v_{\eta(k)},p-v_{\eta(k)}\}$ for some permutation $\eta \in {\mathfrak S}_{s}$. Similarly as in \cite{Ka-Mi}, one obtains that the number of different surjective homomorphisms, up to pre-composing  by a geometric automorphism of $K$, is $$\left( \begin{array}{c}
r+(p-3)/2 \\
(p-3)/2
\end{array}
\right)
\left( \begin{array}{c}
s+(p-3)/2 \\
(p-3)/2
\end{array}
\right)
$$

\subsubsection*{Step 5: The case $r=s=0$}
We only have one possibility given by the surjective homomorphism $\Phi$ so that 
$\Phi(A_{j})=1$, for every $j=2,\dots,t$, and $\Phi_{1}(A_{1})=a$.

\section{Proof of Theorem \ref{maintheo}}

\subsection{}
Before providing the proof of Theorem \ref{maintheo}, we need a couple of remarks and the definition of some maps needed in the proof.

\begin{rema}\label{rem6}
If $K_{1}$ and $K_{2}$ are two finitely generated Kleinian groups that are quasiconformal conjugated, and 
$W:\widehat{\mathbb C} \to \widehat{\mathbb C}$ is a quasiconformal homeomorphism such that $K_{2}=W K_{1} W^{-1}$, then there is a natural holomorphic isomorphism 
$$\iota_{K_{1},K_{2},W}:{\mathcal Q}(K_{1}) \to {\mathcal Q}(K_{2})$$
defined as follows (see \cite{Nag} for details). If $[\mu] \in {\mathcal Q}(K_{1})$ and $W_{\mu}$ is a 
quasiconformal homeomorphism with complex dilation $\mu$, then we set $\iota_{K_{1},K_{2},W}([\mu])=[\nu] \in {\mathcal Q}(K_{2})$, where $[\nu]$ is the class of a Beltrami coefficient of the quasiconformal homeomorphism $W_{\mu} \circ W^{-1}$.
The quasiconformal homeomorphism $W$ conjugating $K_{1}$ onto $K_{2}$ above is not unique, but any other is of the form $W \circ W_{1}^{-1}$, where $W_{1}$ is a quasiconformal homeomorphism with $W_{1} K_{1} W_{1}^{-1}=K_{1}$. The quasiconformal homeomorphism $W_{1}$ induces (as above) a holomorphic automorphism $\iota_{K_{1},K_{1},W_{1}}$ of ${\mathcal Q}(K_{1})$. In this way, the isomorphism $\iota_{K_{1},K_{2},W}$ is unique up to pre-composition with a holomorphic automorphism of ${\mathcal Q}(K_{1})$, that is, we obtain a well defined bijection
$\widehat{\iota}_{K_{1},K_{2}}:{\mathcal M}(K_{1}) \to {\mathcal M}(K_{2})$ making the following diagram commutative.
{\small
$$
\begin{CD}
{\mathcal Q}(K_{1}) @>\iota_{K_{1},K_{2},W}>> {\mathcal Q}(K_{2})\\
@VV\pi_{K_{1}}V @VV\pi_{K_{2}}V\\
{\mathcal M}(K_{1}) @>\widehat{\iota}_{K_{1},K_{2}}>> {\mathcal M}(K_{2})
\end{CD}
$$
}
\end{rema}

\begin{rema}\label{incrusta}
If $\Gamma$ is a finite index subgroup of $K$, then each Beltrami coefficient for $K$ is also a Beltrami differential for $\Gamma$ and, as both $K$ and $\Gamma$ have the same limit set (because of the finite index property) if two Beltrami coefficients for $K$ are equivalent with respect to $K$, then they are also equivalent with respect to $\Gamma$. In particular, it provides a natural holomorphic embedding
$\iota_{\Gamma}:{\mathcal Q}(K) \hookrightarrow {\mathcal Q}(\Gamma): [\mu] \mapsto [\mu].$
\end{rema}

\subsection{}
Let us fix a cyclic-Schottky group $K$ of type $(g,p;t,r,s)$, where $p \geq 3$ is a prime integer. If we set $m=M(g,p;t,r,s)$, then Theorem \ref{topo} ensures that inside $K$ there are exactly $m$ Schottky groups of rank $g$, say $\Gamma_{1}, \dots,\Gamma_{m}$, each one a normal subgroup of index $p$, no two of them being geometrically equivalent. 
The inclusion of $\Gamma_{j}$ inside $K$ induces a holomorphic map $\chi \circ \iota_{j}:{\mathcal Q}(K) \to {\mathcal MS}_{g}$, where $\iota_{j}=\iota_{\Gamma_{j}}:{\mathcal Q}(K) \to {\mathcal Q}(\Gamma_{j})$ is the embedding given in Remark \ref{incrusta}, and $\chi:{\mathcal Q}(\Gamma_{j}) \to {\mathcal MS}_{g}$ is the identification defined in Section \ref{identifica}. The image under $\pi:{\mathcal MS}_{g} \to {\mathcal S}_{g}$, defined in Section \ref{identifica}, of the space
$\chi(\iota_{j}({\mathcal Q}(K))) \subset {\mathcal MS}_{g}$ is a connected subset $F_{j}(g,p;t,r,s)$ of $F(g,p;t,r,s) \subset {\mathcal S}_{g}$. In this way, $$F(g,p;t,r,s)=F_{1}(g,p;t,r,s) \cup \cdots \cup F_{m}(g,p;t,r,s).$$

By Theorem \ref{topo}, the number of connected components of $F(g,p;t,r,s)$ is bounded above by $M(g,p;t,r,s)$. Since $M(g,3;t,r,s)=1$, and $F(g,2;t,r,s)$ is connected \cite{DGGH}, part (1) is also obtained. Let us assume, from now on, that $p \geq 5$. To prove part (3), let us start with the following

\begin{lemm}\label{lemap5}
If $p \geq 5$ is a prime, then $F_{j_{1}}(g,p;t,r,s) \cap F_{j_{2}}(g,p;t,r,s) \neq \emptyset$ if and only if 

\noindent
(1) there are two different cyclic-Schottky groups of type $(g,p;t,r,s)$, say $K_{1}$ and $K_{2}$, containing the same Schottky group $\Gamma$ of rank $g$ as index $p$ normal subgroup and

\noindent
(2) for $i \in \{1,2\}$ there is a geometric isomorphism $\psi_{i}:K \to K_{i}$ with $\psi_{i}(\Gamma_{j_{i}})=\Gamma$  (in other words, $\Gamma$ looks like $\Gamma_{j_{1}}$ inside $K_{1}$ and like $\Gamma_{j_{2}}$ in $K_{2}$).

\end{lemm}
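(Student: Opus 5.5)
We prove both directions by unwinding the definition of $F_{j}(g,p;t,r,s)$ as $\pi(\chi(\iota_{j}({\mathcal Q}(K))))$. The key observation is that a point of $F_{j}(g,p;t,r,s)$ is exactly a class $[\Gamma]\in{\mathcal S}_{g}$ such that some quasiconformal deformation $K_{\mu}=W_{\mu}KW_{\mu}^{-1}$ of $K$ contains $\Gamma$, up to M\"obius conjugation, as the image $W_{\mu}\Gamma_{j}W_{\mu}^{-1}$. Indeed, $\chi(\iota_{j}([\mu]))$ is the marked group $W_{\mu}\Gamma_{j}W_{\mu}^{-1}$, and $\pi$ forgets the marking. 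We also use that every cyclic-Schottky group of type $(g,p;t,r,s)$ is quasiconformally conjugate to $K$, which follows from Theorem \ref{teo1} and the observation (i) in the Introduction.

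For the direction ($\Rightarrow$), take $[\Gamma]\in F_{j_{1}}\cap F_{j_{2}}$. There are $\mu_{1},\mu_{2}\in L^{\infty}_{1}(K)$ and M\"obius transformations $T_{1},T_{2}$ with $T_{i}W_{\mu_{i}}\Gamma_{j_{i}}W_{\mu_{i}}^{-1}T_{i}^{-1}=\Gamma$. Set $K_{i}=T_{i}W_{\mu_{i}}KW_{\mu_{i}}^{-1}T_{i}^{-1}$ and $\psi_{i}=T_{i}W_{\mu_{i}}(\cdot)W_{\mu_{i}}^{-1}T_{i}^{-1}$. Each $K_{i}$ is a cyclic-Schottky group of type $(g,p;t,r,s)$ containing $\Gamma$ as a normal subgroup of index $p$, and $\psi_{i}$ is a geometric isomorphism sending $\Gamma_{j_{i}}$ to $\Gamma$. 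This gives (2).

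For (1), we must show $K_{1}\neq K_{2}$ when $j_{1}\neq j_{2}$. Suppose $K_{1}=K_{2}$. Then $\psi_{2}^{-1}\circ\psi_{1}$ is induced by the quasiconformal map $(T_{2}W_{\mu_{2}})^{-1}\circ T_{1}W_{\mu_{1}}$, which normalizes $K$ and sends $\Gamma_{j_{1}}$ to $\Gamma_{j_{2}}$. It is therefore a geometric automorphism of $K$ making $\Gamma_{j_{1}}$ and $\Gamma_{j_{2}}$ geometrically equivalent. This contradicts the choice of $\Gamma_{1},\dots,\Gamma_{m}$ as pairwise non-equivalent representatives from Theorem \ref{topo}. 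If $j_{1}=j_{2}$ the intersection is trivially nonempty, so the interesting case is $j_{1}\neq j_{2}$, and we will state this explicitly.

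For the direction ($\Leftarrow$), suppose $K_{1},K_{2}$ and $\psi_{i}$ are given, with each $\psi_{i}$ induced by a quasiconformal homeomorphism $W_{i}$. Let $\nu_{i}$ be the complex dilation of $W_{i}$. Since $W_{i}$ conjugates $K$ to $K_{i}$, the dilation $\nu_{i}$ lies in $L^{\infty}_{1}(K)$. By uniqueness in Theorem \ref{A-B}, $W_{i}=T_{i}W_{\nu_{i}}$ for some M\"obius transformation $T_{i}$. Then $\chi(\iota_{j_{i}}([\nu_{i}]))$ is a marking of $W_{\nu_{i}}\Gamma_{j_{i}}W_{\nu_{i}}^{-1}=T_{i}^{-1}\Gamma T_{i}$, so $\pi$ sends it to $[\Gamma]$. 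Hence $[\Gamma]\in F_{j_{1}}\cap F_{j_{2}}$.

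The main obstacle is a technical point in this last step: we must check that a geometric isomorphism is induced by a quasiconformal homeomorphism, so that its dilation is a $K$-Beltrami coefficient. The compatibility condition $\mu(k(z))\overline{k_{z}(z)}=k_{z}(z)\mu(z)$ follows from the conjugacy relation because $\Psi\circ k\circ\Psi^{-1}$ is M\"obius. We must also ensure that $\nu_{i}$ vanishes on the limit set, which we handle by redefining $\nu_{i}$ to be $0$ there. This is harmless because the limit set of a geometrically finite group has measure zero, by Ahlfors's theorem for these groups. The rest of the argument is bookkeeping of conjugations.
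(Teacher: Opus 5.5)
Your proof is correct and uses the same definitional unwinding as the paper: you realize points of $F_{j_i}$ as M\"obius conjugates of $W_{\mu_i}\Gamma_{j_i}W_{\mu_i}^{-1}$ and take $K_i$, $\psi_i$ to be the corresponding conjugates of $K$. You are more careful than the paper in two places. It never checks that $K_1\neq K_2$; your argument via the pairwise geometric non-equivalence of the $\Gamma_j$ from Theorem \ref{topo} supplies this, and also shows the lemma must be read with $j_1\neq j_2$. It also compresses the converse direction and the Beltrami-coefficient bookkeeping into a single ``is equivalent to'', which you spell out.
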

\begin{proof} 
The condition 
$F_{j_{1}}(g,p;t,r,s) \cap F_{j_{2}}(g,p;t,r,s) \neq \emptyset$
is equivalent to have a quasiconformal homeomorphisms $W_{1}, W_{2}:\widehat{\mathbb C} \to \widehat{\mathbb C}$ so that $W_{i} K W_{i}^{-1} < {\mathbb M}$, for $i=1,2$, and $W_{1} \Gamma_{j_{1}} W_{1}^{-1}=A W_{2} \Gamma_{j_{2}} W_{2}^{-1} A^{-1}$, for some $A \in {\mathbb M}$. We take $K_{1}=W_{1}KW_{1}^{-1}$, $K_{2}=AW_{2}KW_{2}^{-1}A^{-1}$, $\Gamma=W_{1}\Gamma_{j_{1}}W_{1}^{-1}$, $\psi_{1}$ is conjugation by $W_{1}$ and $\psi_{2}$ is conjugation by $AW_{2}$.
\end{proof}

\begin{rema}
Let $p \geq 5$ be a prime integer.

\noindent
(1) As a consequence of Lemma \ref{lemap5}, the number of connected components of $F(g,p;t,r,s)$ is strictly smaller than $M(g,p;t,r,s)$ (the number of the irreducible components) exactly when there are two different cyclic-Schottky groups of the same type $(g,p;t,r,s)$, say $K_1$ and $K_2$, each one containing a common Schottky group $\Gamma$ of rank $g$ as a normal subgroup of index $p$, for which there is no an isomorphism (as abstract groups) $\psi:K_1 \to K_2$ preserving $\Gamma$. As noticed before, this only happens if $r$ or $s$ is different from zero. Let $\Omega$ be the region of discontinuity of $\Gamma$ and $M_{\Gamma}=({\mathbb H}^{3} \cup \Omega)/\Gamma$ be the uniformized handlebody by $\Gamma$. If $H_{j}=K_{j}/\Gamma \cong {\mathbb Z}_p$, then $H_1, H_2 \leq {\rm Aut}(M_{\Gamma})$ cannot be topologically conjugated. In fact, if there is a homeomorphism $f:M_{\Gamma} \to M_{\Gamma}$ conjugating $H_1$ into $H_2$, then we may lift  $f$ to the universal cover space ${\mathbb H}^{3} \cup \Omega$ to obtain a homeomorphism preserving $\Gamma$ and conjugating $K_1$ into $K_2$, a contradiction.

\noindent
(2) As noticed in Remark \ref{obs2}, in \cite{L-H} it was shown the existence of a prime integer $p_{0}$ (depending on $t+s$ and $r$) so that for $p \geq p_{0}$ prime integer the groups $H_1$ and $H_2$ must coincide, that is $K_1=K_2$; so the number of connected components of $F(g,p;t,r,s)$ equals $M(g,p;t,r,s)$. Also, if $t=s=0$, then the results in \cite{Gabino}, together (1) above, assert that the number of connected components of $F(g,p;t,r,s)$ equals $M(g,p;t,r,s)$. 

\end{rema}


Let $K_{1}$ and $K_{2}$ be two different cyclic-Schottky groups of type $(g,p;t,r,s)$, both containing the same Schottky group $\Gamma$ of rank $g$ as an index $p$ normal subgroup. Assume the existence of two geometric isomorphisms $\psi_{i}:K \to K_{i}$ with $\psi_{i}(\Gamma_{j_{i}})=\Gamma$, for $i=1,2$.
Let $\Omega$ be the region of discontinuity of $\Gamma$. Then the finite index property of $\Gamma$ in $K_{j}$ asserts that $\Omega$ is also de region of discontinuity of $K_{1}$, $K_{2}$ and $\langle K_{1},K_{2}\rangle$. The manifold $M_{\Gamma}=({\mathbb H}^{3} \cup \Omega)/\Gamma$ is a handlebody of genus $g$. Each $K_{j}$ induces the cyclic group $H_{j}=K_{j}/\Gamma \cong {\mathbb Z}_{p}$ of automorphisms of $M_{\Gamma}$. Since $\Gamma_{j_{1}}$ and $\Gamma_{j_{2}}$ are geometrically non-equivalent, there is no M\"obius transformation $B \in {\mathbb M}$ satisfying that $B \Gamma B^{-1}=\Gamma$ and $B K_{1} B^{-1}=K_{2}$. In particular, the cyclic groups $H_{1}$ and $H_{2}$ are non-conjugated in the group ${\rm Aut}(M_{\Gamma})$ of automorphisms of $M_{\Gamma}$. So, we may assume that $H_{1}$ and $H_{2}$ belong to the same $p$-Sylow subgroup of ${\rm Aut}(M_{\Gamma})$ and that they are in fact different. Let us consider the $p$-group $G=\langle H_{1}, H_{2} \rangle$, which has order $p^{\alpha}$, where $ \alpha \geq 2$. In particular, there is a subgroup $J<G$ of order $p^{2}$ with $H_{1} \lhd J$. Since $J$ must be abelian, it may happen that $J \cong {\mathbb Z}_{p^{2}}$ or $J \cong {\mathbb Z}_{p} \oplus {\mathbb Z}_{p}$. The orbifold ${\mathcal O}_{1}=M_{\Gamma}/H_{1}$ is a handlebody of genus $t+s$, whose conical locus is formed by $r$ simple arcs and $s$ simple loops (all of them pairwise disjoint) each one of order $p$. The group $J$ induces a cyclic group $\widehat{J}$ acting on ${\mathcal O}_{1}$ which keeps invariant the conical locus. Since $p \geq 5$, none of the loops or arcs can be kept invariant under $\widehat{J}$. It follows that $r$ and $s$ are multiples of $p$; so part (3) of Theorem \ref{maintheo} is proved.

\section{Examples} \label{Sec:ejemplo}
\subsection{Example 1}\label{Ejemplo1}
As $M(g,p;t,0,0)=1$, the locus $F(g,p;t,0,0)$ is connected. Let us assume that $g=26$, $p=5$ and $t=6$.
Let $K$ be a Schottky group of rank two, say generated by $A$ and $B$, and consider the surjective homomorphism
$\theta:K \to \langle a,b: a^{5}=b^{5}=aba^{-1}b^{-1}=1\rangle \cong {\mathbb Z}_{5}^{2}$, defined by 
$A \mapsto a, \quad B \mapsto b$. The kernel of $\theta$ is a Schottky group $\Gamma$ of rank $26$, the group $K_{1}=\theta^{-1}(\langle a \rangle)$ is a cyclic-Schottky group of type $(26,5;6,0,0)$, generated by $C_{1}=A$, $C_{2}=BAB^{-1}$, $C_{3}=B^{2}AB^{-2}$, $C_{4}=B^{3}AB^{-3}$, $C_{5}=B^{4}AB^{-4}$, $C_{6}=B^{5}$, and the group $K_{2}=\theta^{-1}(\langle b \rangle)$ is a cyclic-Schottky group of type $(26,5;6,0,0)$, generated by $D_{1}=B$, $D_{2}=ABA^{-1}$, $D_{3}=A^{2}BA^{-2}$, $D_{4}=A^{3}BA^{-3}$, $D_{5}=A^{4}BA^{-4}$, $D_{6}=A^{5}$.
The group $\Gamma$, seen inside $K_{1}$ is generated by $C_{1}^{5}$, $C_{1}C_{6}C_{1}^{-1}$, $C_{1}^{2}C_{6}C_{1}^{-2}$, $C_{1}^{3}C_{6}C_{1}^{-3}$, $C_{1}^{4}C_{6}C_{1}^{-4}$, and  seen inside $K_{2}$ is generated by $D_{1}^{5}$, $D_{1}D_{6}D_{1}^{-1}$, $D_{1}^{2}D_{6}D_{1}^{-2}$, $D_{1}^{3}D_{6}D_{1}^{-3}$, $D_{1}^{4}D_{6}D_{1}^{-4}$.
The isomorphism $\psi:K \to K$, defined by $\psi(A)=B$ and $\psi(B)=A$, satisfies that  $\psi(K_{1})=K_{2}$ (in fact, $\psi(C_{j})=D_{j}$). (Up to a quasiconformal deformation of $K$, we may assume that $\psi$ is defined by conjugation by a M\"obius transformation of order two.) 
Since $\psi(C_{1}^{5})=D_{1}^{5}$, $\psi(C_{1}C_{6}C_{1}^{-1})=D_{1}D_{6}D_{1}^{-1}$, 
$\psi(C_{1}^{2}C_{6}C_{1}^{-2})=D_{1}^{2}D_{6}D_{1}^{-2}$, $\psi(C_{1}^{3}C_{6}C_{1}^{-3})=D_{1}^{3}D_{6}D_{1}^{-3}$ and $\psi(C_{1}^{4}C_{6}C_{1}^{-4})=D_{1}^{4}D_{6}D_{1}^{-4}$, one sees that $\psi$ preserves $\Gamma$. In other words, $\psi$ induces an automorphism of order two in the handlebody $M_{\Gamma}$ which conjugates $H_{1}=K_{1}/\Gamma$ into $H_{2}=K_{2}/\Gamma$.

\subsection{Example 2}\label{ejemplo2}
Let $p \geq 5$ be a prime integer and let $m \geq 4$ be some integer. Consider any two tuples 
$(\alpha_{1},\ldots,\alpha_{m}), (\beta_{1},\ldots,\beta_{m}) \in ({\mathbb Z}_{p}^{*})^{m} ={\mathbb Z}_{p-1}^{m}$
that do not belong to the same orbit under the action of ${\mathbb Z}_{p}^{*} \rtimes {\mathfrak S}_{m}$, where ${\mathbb Z}_{p}^{*}$ acts by multiplication on the coordinates and the symmetric group ${\mathfrak S}_{m}$ acts by permutation of the coordinates. Choose pairwise different complex  numbers 
$a_{1,1}$, $a_{1,2}$, $a_{2,1}$, $a_{2,2},\ldots,a_{m,1}$, $a_{m,2}$, $b_{1,1},b_{1,2},b_{2,1},b_{2,2},\ldots,b_{m,1},b_{m,2}$ and 
consider the Riemann surface (a fiber product of two $p$-gonal curves)
$$S= \left\{ \begin{array}{c}
y_{1}^{p}=\prod_{j=1}^{m} (x-a_{j,1})^{\alpha_{j}}(x-a_{j,2})^{p-\alpha_{j}}\\
y_{2}^{p}=\prod_{j=1}^{m} (x-b_{j,1})^{\beta_{j}}(x-b_{j,2})^{p-\beta_{j}}
\end{array}
\right.
$$

If $\omega_{p}=e^{2 \pi i /p}$, then  
$A_{1}(x,y_{1},y_{2})=(x,\omega_{p} y_{1}, y_{2})$ and $A_{2}(x,y_{1},y_{2})=(x,y_{1}, \omega_{p}  y_{2})$
are commuting automorphism of order $p$ on $S$.
The map $\pi:S \to \widehat{\mathbb C}: (x,y_1,y_2) \mapsto x$ is a branched Galois covering with deck group $\langle A_{1},A_{2}\rangle \cong {\mathbb Z}_{p}^{2}$ whose branch values (each one of order $p$) are given by 
$a_{1,1},a_{1,2}, a_{2,1},a_{2,2},\ldots,a_{m,1},a_{m,2}$, $b_{1,1},b_{1,2},b_{2,1},b_{2,2},\ldots,b_{m,1},b_{m,2}$.
It follows from the Riemann-Hurwitz formula that $S$ has genus $g=(p-1)(2mp-p-1)$.
The fixed points of $A_1$ are of the form 
{\small
$$u_{j,\delta,k}=\left(a_{j,\delta},0,\omega_{p}^{k} \sqrt[p]{\prod_{i=1}^{m}(a_{j,\delta}-b_{i,1})^{\beta_{i}}(a_{j,\delta}-b_{i,2})^{p-\beta_{i}}}\;\right),$$
}
and those of $A_2$ are of the form
{\small
$$v_{j,\delta,k}=\left(b_{j,\delta},\omega_{p}^{k} \sqrt[p]{\prod_{i=1}^{m}(b_{j,\delta}-a_{i,1})^{\alpha_{i}}(b_{j,\delta}-a_{i,2})^{p-\alpha_{i}}},0\right),$$
}
where $j=1,\ldots,m$, $k=0,1,\ldots,p-1$ and  $\delta=1,2$.

Observe that the rotation number of $A_1$ on each $u_{j,\delta,k}$, with $j$ and $\delta$ fixed, is the same. Similarly, the rotation number of $A_2$ on each $v_{j,\delta,k}$, with $j$ and $\delta$ fixed, is the same. By results in \cite{Hidalgo:SchottkyAuto}, there is a Schottky uniformization of $S$ (say given by the Schottky group $\Gamma$) for which $\langle A_1,A_2\rangle$ lifts; that is, there is a Kleinian group $K$ containing $\Gamma$ as a normal subgroup and $K/\Gamma \cong {\mathbb Z}_{p}^{2}$. Let $\theta:K \to \langle A_1,A_2\rangle$ be the canonical surjective homomorphism. Then $K_j=\theta^{-1}(\langle A_j \rangle$ is a Schottky-cyclic of type $(g,p;t,mp,0)$, where $t=(p-1)(m-1)$.
We claim that $\langle A_1 \rangle$ and $\langle A_2\rangle$ are topologically non-conjugated. In fact, the existence of a homeomorphism $F:S \to S$ conjugating $\langle A_1 \rangle$ into $\langle A_2\rangle$ must preserve the rotation numbers (up to the action of ${\mathbb Z}_{p}^{*} \rtimes {\mathfrak S}_{m}$). This is a contradiction with the choice of the tuples we have done. This asserts that two different irreducible components of $F(g,p;(p-1)(m-1),mp,0)$ intersect, so the number of connected components will be smaller than $M(g,p;(p-1)(m-1),mp,0)$. In fact, it can be seen (working with all possible situations) $that F(g,p;(p-1)(m-1),mp,0)$ is connected.

\section{A final remark: Counting topologically non-equivalent cyclic-Schottky groups}\label{Sec:consecuencias}
In this last section, for each integer $g \geq 2$ and each prime integer $p \geq2$, we are interested in finding the number 
$N(p,g)$ of topologically different $p$-cyclic-Schottky strata are for a fixed genus $g$.
 It is not clear, at least for the authors, a closed form for $N(p,g)$. Below, we proceed to obtain it for $p \in \{2,3\}$ and, for the case $p \geq 5$, we describe a simple algorithm.
By Theorem \ref{teo1} (see also Remark \ref{unicidad}), $N(p,g)$ is equal to the number of different admissible tuples $(g,p;t,r,s)$, so 
we only need to compute the number of different triples  $(t,r,s),$ where $t,r,s \geq 0$ and $g-1=p(t+r+s-1)-r=p(t+s-1)+r(p-1)$. Thus, 
$N(p,g)$ is exactly the number of pairs $(t,s)$ satisfying: 
\begin{enumerate}
\item[(i)] $t,s \in \{0,1,...,[(g+p-1)/p]\}$, 
\item[(ii)] $t+s \leq [(g+p-1)/p]$ and 
\item[(iii)] $g-p(t+s) \equiv 0 \mod(p-1)$.
\end{enumerate}

The above permits us to observe the following.
\begin{enumerate}
\item If $p=2$, then every pair $(t,s)$ with $t+s \in \{0,1,...,[(g+1)/2]$ satisfies (i)-(iii), in particular
{\small $$N(2,g)= \frac{(1+[\frac{g+1}{2}])(2+[\frac{g+1}{2}])}{2}.$$}

\item 
If $p=3$, then condition (iii)  is equivalent to $g-3(t+s)$ being even, so  
{\small
$$N(3,g)=\left\{ \begin{array}{cc}
\frac{\left([\frac{g+2}{3}]+2\right)^{2}}{4}, & \mbox{if $g$ is even and } \; [\frac{g+2}{3}] \; \mbox{is even}\\
\frac{\left([\frac{g+2}{3}]+1\right)^{2}}{4}, & \mbox{if $g$ is even and } \; [\frac{g+2}{3}] \; \mbox{is odd}\\
\frac{[\frac{g+2}{3}]\left([\frac{g+2}{3}]+2\right)}{4}, & \mbox{if $g$ is odd and } \; [\frac{g+2}{3}] \; \mbox{is even}\\
\frac{\left([\frac{g+2}{3}]+1\right)\left([\frac{g+2}{3}]+3\right)}{4}, & \mbox{if $g$ is odd and } \; [\frac{g+2}{3}] \; \mbox{is odd}\\
\end{array}
\right.
$$
}
\end{enumerate}

\subsection{An algorithm}
If $p \geq 5$ is a prime, then $N(p,g)$ and the corresponding triples $(t,r,s)$
can be obtained with the following short program, written for {\sc Mathematica} \cite{Mathematica}.

\begin{itemize}
\item[] $n[p\_, g\_ ] := Block[\{ \}$, $k := 0;$
Do[ Do[ Do[ If[$t+r+s - 1 \neq  0$,
\item[] If[$(g + r - 1)/(t+r+s - 1) == p$,
$\{k = k + 1,\\
{\rm Print}["(", k, ") ", "t=", t, "," , " r= ", r, "," , " s=", s, ",", " p=", p]\}]]$, 
\item[] $\{r, 0, (g + 1 - 2*(t+s))\}], \{s, 0, g\}], \{t, 0, g\}]$;
Print$["N(", p, ",", g, ")=", k]]$
\end{itemize}

For instance, 
\begin{enumerate}
\item[(i)] $N(5,5)=2$ and $(t,r,s) \in \{(0,1,1), (1,0,1)\}$.

\item[(ii)] $N(5,10)=3$ and $(t,r,s) \in \{(0,2,1),(1,1,1),(2,0,1)\}$.

\item[(iii)] $N(11,10)=1$ and $(t,r,s)=(0,0,2)$.

\item[(iv)] $N(11,100)=12$
and 
$$
(t,r,s) \in \left\{
\begin{array}{c} 
(0,0,11),  
(0,10,0), 
(1,9,0),
(2,8,0) ,
(3,7,0), 
(4,6,0),\\
(5,5,0),
(6,4,0),
(7,3,0),
(8,2,0), 
(9,1,0),
(10,0,0) 
\end{array}
\right\}.
$$
\item[(v)] $N(13,157)=16$ and
$$
(t,r,s) \in \left\{
\begin{array}{c} 
(0,1,13),
(0,13,0),
(1,0,13),
(1,12,0),
(2,11,0),
(3,10,0),
(4,9,0),
(5,8,0),\\
(6,7,0),
(7,6,0),
(8,5,0),
(9,4,0),
(10,3,0),
(11,2,0),
(12,1,0),
(13,0,0)
\end{array}
\right\}.
$$

\end{enumerate}


\end{document}